\definecolor{brass}{rgb}{0.71, 0.65, 0.36}
\theoremstyle{plain}
\newtheorem{theorem}{Theorem}[section]
\newtheorem{prop}[theorem]{Proposition}
\theoremstyle{definition}
\newtheorem{remark}[theorem]{Remark}
\numberwithin{equation}{section}
\newcommand{\td}{\text{d}}
\theoremstyle{plain}
\numberwithin{equation}{section}
\begin{document}
\title[The Spacetime Penrose Inequality for Cohomogeneity One Initial Data]{The Spacetime Penrose Inequality for Cohomogeneity One Initial Data}

\author[Khuri]{Marcus Khuri}
\address{Department of Mathematics\\
Stony Brook University\\
Stony Brook, NY 11794, USA}
\email{marcus.khuri@stonybrook.edu}

\author[Kunduri]{Hari Kunduri}
\address{Department of Mathematics \& Statistics\\ and Department of Physics \& Astronomy\\
McMaster University\\ Hamilton, ON L8S 4M1, Canada}
\email{kundurih@mcmaster.ca}


\thanks{M. Khuri acknowledges the support of NSF Grant DMS-2104229. H. Kunduri acknowledge the support of NSERC Grant RGPIN-2018-04887.}

\begin{abstract}
We prove the spacetime Penrose inequality for asymptotically flat $2(n+1)$-dimensional initial data sets for the Einstein equations, which are invariant under a cohomogeneity one action of $\mathrm{SU}(n+1)$. Analogous results are obtained for asymptotically hyperbolic initial data that arise as spatial hypersurfaces in asymptotically Anti de-Sitter spacetimes. More precisely, it is shown that with the dominant energy condition, the total mass is bounded below by an explicit function of the outermost apparent horizon area. Furthermore, the inequality is saturated if and only if the initial data isometrically embed into a Schwarzschild(-AdS) spacetime. 
This generalizes the only previously known case of the conjectured spacetime Penrose inequality, established under the assumption of spherical symmetry.
Additionally, in the time symmetric case, we observe that the inequality holds for $4(n+1)$-dimensional and 16-dimensional initial data invariant under cohomogeneity one actions of $\mathrm{Sp}(n+1)$ and $\mathrm{Spin}(9)$, respectively, thus treating the inequality for all cohomogeneity one actions in this regime.
\end{abstract}

\maketitle

\section{Introduction}
\label{sec1} \setcounter{equation}{0}
\setcounter{section}{1}

In an effort to find a counterexample to the weak cosmic censorship conjecture \cite{Penrose}, Penrose put forward a precise inequality \cite{Penrose1} relating the ADM mass $m$ of an asymptotically flat 4-dimensional spacetime to any cross-sectional area $\mathcal{A}_{e}$ of the event horizons it contains, in the form
\begin{equation}
m\geq\sqrt{\frac{\mathcal{A}_e}{16\pi}}.
\end{equation}
It is typical to reformulate this inequality in the setting of initial data sets. Consider a triple $(M^d,g,k)$ consisting of a $d$-dimensional connected and orientable manifold $M^d$ with boundary, a complete Riemannian metric $g$, and a symmetric 2-tensor $k$ denoting the extrinsic curvature of an embedding into spacetime, with all objects being smooth. These quantities must satisfy the constraint equations
\begin{equation}\label{densityem}
16 \pi \mu=R+(\text{Tr}_g k)^2-|k|_g^2, \qquad\text{ }
8\pi J=\text{div}_g\left(k-(\text{Tr}_g k)g\right),
\end{equation}
where $R$ is scalar curvature, and $\mu$, $J$ represent the energy-momentum density of matter fields. We will say that the \textit{dominant energy condition} is satisfied if $\mu\ge |J|_g$. Moreover, the data will be referred to as \textit{asymptotically flat} if outside a compact set $\mathcal{C}$ there is a diffeomorphism $\varphi: M^d \setminus \mathcal{C}\rightarrow\mathbb{R}^d\setminus B_1$, such that in the Caretesian coordinates $x$ provided by this map
\begin{equation}\label{1.3}
\varphi_{*} g-\delta=O_2(|x|^{-\tau}), \quad
 \varphi_{*} k= O_1(|x|^{-\tau-1}),\quad
\mu,J=O(|x|^{-2\tau-2}),\quad \mathrm{Tr}_g k=O_1(|x|^{-2\tau-1}),
\end{equation}
for some $\tau>\tfrac{d-2}{2}$. The additional decay on the trace of $k$ is usually not included in the definition of asymptotically flatness, but will be useful when working with the generalized Jang equation below.
With these asymptotics the ADM energy and linear momentum are well-defined \cites{Bartnik,Chrusciel} and given by
\begin{equation}
E\!=\!\frac{1}{2(d-1)\omega_{d-1}}\lim_{r\rightarrow\infty}
\int_{S_{r}}(g_{ij,i}-g_{ii,j})\nu^{j}dV,\quad
P_i \!=\!\frac{1}{(d-1)\omega_{d-1}}\lim_{r\rightarrow\infty}
\int_{S_{r}}(k_{ij}-(\text{Tr}_g k)g_{ij})\nu^{j}dV,
\end{equation}
where $S_r$ are coordinate spheres with unit outer normal $\nu$ and $\omega_{d-1}$ is the volume of the unit $(d-1)$-sphere. 
The ADM mass is then the Lorentz length of the energy-momentum vector, $m=\sqrt{E^2-|P|^2}$.

The role of the event horizon is replaced by that of an apparent horizon, which may be computed directly from the initial data. Recall that the gravitational field's strength
near a hypersurface $\Sigma\subset M^d$ may be probed by the null expansions (null mean curvatures) 
\begin{equation}
\theta_{\pm}=H_{\Sigma}\pm \text{Tr}_{\Sigma}k,
\end{equation}
where $H_{\Sigma}$ denotes the mean curvature with respect to the 
normal pointing towards infinity. These give the rate of change for area of a shell of light emanating from the surface in the outward future/past direction ($+$/$-$). Future or past trapped surfaces are defined by the inequalities $\theta_{+}< 0$ or $\theta_{-}< 0$, respectively, and may be interpreted as lying within a region of strong gravity. When $\theta_{+}=0$ or $\theta_{-}=0$ the surface is called a future or past apparent horizon; these naturally arise
as boundaries of future or past trapped regions \cite{AnderssonMetzger}. Furthermore, such a surface will be called an \textit{outermost apparent horizon} if it is not enclosed by any other apparent horizon. The conjectured Penrose inequality for general dimensions may then be recast as
\begin{equation}\label{ofinaoinoinoiqhhq}
m\geq \frac{1}{2}\left(\frac{\mathcal{A}_h}{\omega_{d-1}}\right)^{\frac{d-2}{d-1}}
\end{equation}
whenever the dominant energy condition holds, where $\mathcal{A}_h$ is the smallest area required to enclose the outermost apparent horizon. Equality should be achieved only for slices of the Schwarzschild spacetime.

In the (Riemannian) time symmetric case when $k=0$, the 3-dimensional Penrose inequality has been confirmed by Huisken-Ilmanen \cite{Huisken:2001} and Agostiniani-Mantegazza-Mazzieri-Oronzio \cite{Mazzierietal} for a single black hole via inverse mean curvature flow and $p$-harmonic functions repsectively, and by Bray \cite{Bray} for multiple black holes using a conformal flow. The latter approach has been generalized by Bray-Lee \cite{Bray:2007opu} up to dimension 7. Within the context of the general spacetime setting, there are very few results. In fact, for this regime the conjectured inequality has only been verified
in the spherically symmetric case \cites{Hayward,MM} with the rigidity statement also obtained in \cites{Bray:2009au,BKS}, \cite[Theorem 7.46]{Leetext}; these results hold in all dimensions. 

In the present note we consider cohomogeneity one initial data sets. Recall that a Riemannian manifold $(M^d,g)$ is said to have \textit{cohomogeneity one} if a (compact connected) Lie group $\mathcal{G}$ acts by isometries on $M^d$ such that the principal orbits $\mathcal{G}/\mathcal{H}$ are of codimension one, where $\mathcal{H}$ is a principal isotropy subgroup (for general initial data sets we also assume that $k$ is invariant under the action of $\mathcal{G}$). 
Within the setting of interest, the manifold will be asymptotically flat or asymptotically hyperbolic with an outermost apparent horizon boundary; note that smooth outermost apparent horizons inherit the symmetries of the initial data from which they arise \cite[Lemma 3.1]{BKS} (\cite[Theorem 8.1]{AMS}, \cite{PS}). Therefore, the principal orbit theorem implies that the orbit space $M^{d} / \mathcal{G}$ is diffeomorphic to a half line $[0,\infty)$ with the origin corresponding to the apparent horizon, and $M^d \cong [0,\infty) \times \mathcal{G} / \mathcal{H}$. This is discussed further in Section \ref{sec2}, where it is also shown that the structure at infinity ensures that the surfaces of homogeneity will be spheres. A classification of the possible homogeneous metrics on spheres has been obtained by Ziller \cite{Ziller}. In addition to the standard round metric, there are 
odd-dimensional cases corresponding to $S^{2n+1} = \mathrm{SU}(n+1)/\mathrm{SU}(n)$, $S^{4n+3} = \mathrm{Sp}(n+1)/ \mathrm{Sp}(n)$, and a homogeneous metric on $S^{15} = \mathrm{Spin}(9) / \mathrm{Spin}(7)$.  We will establish the spacetime Penrose inequality for the first of these cases in the asymptotically flat and asymptotically hyperbolic contexts, namely for initial data of dimension $d=2(n+1)$, $n\geq 1$ which are invariant under the action of $\mathrm{SU}(n+1)$. Note that in contrast to the spherically symmetric case, this class of initial data includes those with 
non-vanishing angular momentum, and there are explicit rotating black hole solutions arising from data in this class, see Appendix \ref{appA}. To accomplish this goal we will exploit a method proposed by Bray and the first author \cites{Bray:2009oni, Bray:2009au} which involves coupling inverse mean curvature flow to the so called generalized Jang equation. In these higher dimensions, lack of the Gauss-Bonnet theorem presents difficulties for monotonicity of Hawking mass, however in the current setting a fortuitous cancellation occurs which allows the procedure to go through. 

\begin{theorem}\label{foiqnoinoiqnhh}
Let $(M^{2(n+1)},g,k)$, $n\geq 1$ be an asymptotically flat $\mathrm{SU}(n+1)$-invariant initial data set, with outermost apparent horizon boundary of area $\mathcal{A}$. If the dominant energy condition is satisfied then 
\begin{equation}\label{foiaoinoqinhoipqjh}
m \geq \frac{1}{2}\left(\frac{\mathcal{A}}{\omega_{2n+1}}\right)^{\frac{2n}{2n+1}},
\end{equation}
and equality occurs if and only if the initial data arise from an isometric embedding into a Schwarzschild spacetime. 
\end{theorem}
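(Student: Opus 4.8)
The plan is to run the coupled inverse mean curvature flow / generalized Jang equation scheme of Bray and the first author \cites{Bray:2009oni,Bray:2009au}, using the $\mathrm{SU}(n+1)$ symmetry to reduce the governing PDEs to ODEs, and then to isolate the cancellation that takes the place of the Gauss--Bonnet theorem in dimensions $d>3$. By the discussion of Section~\ref{sec2} we may write $M^{2(n+1)}\cong[s_{0},\infty)\times S^{2n+1}$ with
\[
g=ds^{2}+a(s)^{2}\eta^{2}+b(s)^{2}\check{g},
\]
where $\check g$ is the Fubini--Study metric on $\mathbb{CP}^{n}$, $\eta$ is the connection one--form of the Hopf fibration $S^{1}\hookrightarrow S^{2n+1}\to\mathbb{CP}^{n}$, and $s_{0}$ marks the outermost apparent horizon; the invariant symmetric tensor $k$ is likewise encoded by finitely many functions of $s$. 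A solution $f$ of the generalized Jang equation and its warping factor $\phi$ may be taken $\mathrm{SU}(n+1)$--invariant, so the Jang metric $\bar g=g+\phi^{2}\,df\otimes df$ is again cohomogeneity one, $\bar M\cong[\bar s_{0},\infty)\times S^{2n+1}$, and the Jang equation becomes an ODE; in this symmetric regime the existence and blow--up analysis is an ODE matter, with $f$ blowing up at the outermost horizon so that $\bar M$ acquires an asymptotically cylindrical end there, while the decay hypotheses (including the extra decay on $\mathrm{Tr}_{g}k$) ensure $\bar M$ is asymptotically flat with ADM mass $\bar m=m$.

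Next, run inverse mean curvature flow in $(\bar M,\bar g)$ emanating from the cylindrical end. By symmetry and uniqueness of the flow, the surfaces $\Sigma_{t}$ are exactly the orbits $\mathrm{SU}(n+1)/\mathrm{SU}(n)$ carrying homogeneous ("Berger") metrics, and the mean curvature $H$ is constant on each $\Sigma_{t}$; the outermost hypothesis is used here to guarantee $H>0$ off the cylindrical end, so that the flow is smooth, exhausts $\bar M$, and satisfies $|\Sigma_{t}|=\mathcal{A}\,e^{t}$. We monitor the Hawking mass
\[
m_{H}(\Sigma_{t})=\tfrac12\Big(\tfrac{|\Sigma_{t}|}{\omega_{2n+1}}\Big)^{\frac{2n}{2n+1}}\Big(1-c_{n}\,|\Sigma_{t}|^{-\frac{2}{2n+1}}\!\int_{\Sigma_{t}}H^{2}\Big),
\]
so that $m_{H}(\Sigma_{t})\to\tfrac12(\mathcal{A}/\omega_{2n+1})^{\frac{2n}{2n+1}}$ as $t\to-\infty$ (since $H\to0$ and $|\Sigma_{t}|\to\mathcal{A}$), while $m_{H}(\Sigma_{t})\to\bar m=m$ as $t\to\infty$.

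The heart of the matter is monotonicity of $m_{H}(\Sigma_{t})$. Differentiating along the flow by means of the evolution equation for $H$ and the Gauss equation, and using that $\nabla_{\Sigma_{t}}H\equiv0$ by homogeneity so that the usual gradient term disappears, one is left with a formula of the shape
\[
\tfrac{d}{dt}m_{H}(\Sigma_{t})=C_{n}\,|\Sigma_{t}|^{\frac{1}{2n+1}}\!\int_{\Sigma_{t}}\big(\bar R+\mathcal{D}\big)+\tfrac{d}{dt}(\text{boundary terms}),
\]
with $C_{n}>0$, where $\bar R$ is the Jang scalar curvature and $\mathcal{D}$ is a curvature defect assembled from the non--umbilicity $|A|^{2}-\tfrac{H^{2}}{2n+1}$ of $\Sigma_{t}\subset\bar M$ (with $A$ its second fundamental form) and from the deviation of the intrinsic scalar curvature $\mathcal{R}_{\Sigma_{t}}$ from the value it would take on a round sphere of the same volume; this comparison replaces the Gauss--Bonnet input available only when $d=3$. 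The step I expect to be the main obstacle, and the "fortuitous cancellation" alluded to in the introduction, is a direct computation showing that for the homogeneous metrics on $S^{2n+1}=\mathrm{SU}(n+1)/\mathrm{SU}(n)$ that occur here as cross--sections, the two pieces of $\mathcal{D}$ combine with the right sign, so that $\tfrac{d}{dt}m_{H}(\Sigma_{t})\ge C_{n}|\Sigma_{t}|^{\frac{1}{2n+1}}\int_{\Sigma_{t}}\bar R+\tfrac{d}{dt}(\text{boundary terms})$; this is exactly where the argument is sensitive to the particular homogeneous space. The generalized Jang identity together with the dominant energy condition gives $\bar R\ge2|q|_{\bar g}^{2}-2\operatorname{div}_{\bar g}q$, and with $\phi$ chosen as dictated by the coupling to the flow (as in \cite{Bray:2009au}) the contribution of $\operatorname{div}_{\bar g}q$ becomes a further boundary term. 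Since all boundary terms vanish at the cylindrical end and at infinity, $m_{H}(\Sigma_{t})$ is nondecreasing, and letting $t\to\pm\infty$ yields $m=\bar m\ge\tfrac12(\mathcal{A}/\omega_{2n+1})^{\frac{2n}{2n+1}}$.

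Finally, rigidity. That a Schwarzschild initial data set saturates \eqref{foiaoinoqinhoipqjh} is a direct computation. Conversely, equality forces $\tfrac{d}{dt}m_{H}\equiv0$, hence equality in every estimate above: $\mu=|J|_{g}$, the second fundamental form of the Jang graph equals $k$, $q\equiv0$, $\bar R\equiv0$, each $\Sigma_{t}$ is umbilic, and the Berger parameter $a/b$ is pinned to its round value along the flow. The resulting ODEs identify $(\bar M,\bar g)$ with a spatial Schwarzschild metric, and the condition that the graph's second fundamental form equals $k$ upgrades this to an isometric embedding of $(M,g,k)$ into the Schwarzschild spacetime, as in \cite{BKS} and \cite[Theorem~7.46]{Leetext}.
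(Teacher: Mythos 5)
Your proposal follows essentially the same route as the paper: $\mathrm{SU}(n+1)$-symmetry reduces the generalized Jang equation to an ODE, the Jang manifold inherits a cohomogeneity-one structure, IMCF in it is the radial flow, and the Geroch-type monotonicity formula plus the dominant energy condition produce the inequality, with rigidity pushed back to the spherically symmetric case via~\cite{BKS}. Two points deserve correction, though. First, with the boundary data the paper uses ($v(0)=\pm1$ when $H(0)\neq0$, or $v(0)=0$ when $H(0)=0$), the Jang manifold has an \emph{outermost minimal surface boundary} of the same area $\mathcal{A}$ and finite $\bar g$-distance to it, not an asymptotically cylindrical end; the coupling to $\phi\to0$ is designed precisely to turn the Schoen--Yau cylindrical blow-up into a compact minimal boundary from which the flow starts at $\bar t=0$. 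Second, you describe the ``fortuitous cancellation'' as a joint positivity of the non-umbilicity term and the scalar-curvature defect, but the non-umbilicity term $|II|^2-\tfrac{H^2}{d-1}$ is nonnegative by Cauchy--Schwarz on any hypersurface and is insensitive to which homogeneous space you are on; what is actually special about $\mathrm{SU}(n+1)/\mathrm{SU}(n)$ is purely the bound $\int_{\Sigma}R_\Sigma\,dV\le c_n|\Sigma|^{(2n-1)/(2n+1)}$ (Proposition~\ref{prop:monotonicity}), which for $n>1$ reduces to the positivity of the polynomial $I_n(\varrho)=2n+1+\varrho^{2(n+1)}-2(n+1)\varrho$ and for $n=1$ to a separate two-variable computation because $S^3$ admits a strictly larger family of $\mathrm{SU}(2)$-invariant metrics than your ansatz $g=ds^2+a^2\eta^2+b^2\check g$ allows. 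You flag this inequality as the likely obstacle but leave it unverified; this is the load-bearing lemma, and without it the proof is incomplete -- it is exactly the step that fails for $\mathrm{Sp}(n+1)$ and $\mathrm{Spin}(9)$ cross-sections, which is why the paper needs a different argument (conformal flow plus late-time IMCF) in Theorem~\ref{thm3}.
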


Consider now asymptotically hyperbolic initial data relevant for asymptotically Anti-de Sitter (AdS) spacetimes. Let $(\mathbb{H}^d,b)$ denote the $d$-dimensional hyperbolic space with metric expressed in geodesic polar coordinates as $b=dr^2 +(\sinh^2 r) g_{S^{d-1}}$.
Recall that hyperbolic space arises as a totally geodesic spacelike slice of the Anti-de Sitter (AdS) spacetime, $(\mathbb{R} \times \mathbb{H}^d, -(\cosh^2 r) dt^2 + b)$. An initial data set $(M^d,g,k)$ satisfying the modified constraints
\begin{equation}\label{AdSID}
16 \pi \mu=R+(\text{Tr}_g k)^2-|k|_g^2 + d(d-1), \qquad
8\pi J=\text{div}_g\left(k-(\text{Tr}_g k)g\right),
\end{equation} 
will be referred to as {\it asymptotically hyperbolic} if outside a compact set $\mathcal{C}$ there is a diffeomorphism $\varphi : M^d \setminus \mathcal{C} \rightarrow \mathbb{H}^d \setminus B_1$ such that
\begin{equation}\label{AdSdecay}
\mathfrak{g}:=\varphi_* g-b=O_2(e^{-qr}),\qquad \varphi_* k=O_1(e^{-qr}), \qquad 
\mu, J=O(e^{-2q r}), 
\end{equation} 
for $q>d/2$. The dominant energy condition in this setting is again expressed as $\mu\geq|J|_g$. Initial data satisfying \eqref{AdSID} and \eqref{AdSdecay} arise as spacelike hypesurfaces in asymptotically AdS spacetimes with (negative) cosmological constant $\Lambda = -\frac{d(d-1)}{2}$. Well-defined
global Hamiltonian charges, interpreted as the total energy-momentum for such data, were obtained by Chru\'{s}ciel-Nagy \cite{ChruscielNagy} (see also \cite{Michel} and the exposition in \cite{Cederbaum}). In particular, if $U =\cosh r$ denotes the time translation lapse function then the total energy is given by
\begin{equation}\label{CHenergy}
E_{\mathrm{hyp}} =\frac{1}{2(d-1) \omega_{d-1}} \lim_{r \to \infty} \int_{S_r} \left[U(\text{div}_b \mathfrak{g}) - U (d \mathrm{Tr}_b \mathfrak{g}) + (\mathrm{Tr}_b \mathfrak{g}) d U - \mathfrak{g}(\nabla_b U)\right] (\nu_b) d V,
\end{equation} 
where $\nu_b$ is the unit outward normal (measured in $b$) to the coordinate spheres $S_r$. An analogous expression gives rise to the total linear momentum by replacing $U$ with the lapse functions $U_i = x^i \sinh r$, $i=1,\ldots d$ defined on $\mathbb{H}^{d}$, where $x^i$ are Cartesian coordinates restricted to the unit sphere $S^{d-1}$. A version of the spacetime Penrose inequality is conjectured to hold in this asymptotically hyperbolic setting \cites{Bray:2003ns,Cha:2017gej,Gibbons,Mars}, and is relevant in the context of the gauge theory/gravity correspondence \cite{EH}. The inequality has been established in spherical symmetry by Engelhardt-Folkestad \cite[Theorem 6]{EF}, Folkestad \cite[Theorem 1]{F}, and Husain-Singh \cite{HS}, with various hypotheses. Moreover, it has been proved when $k=0$ for small perturbations of the Schwarzschild-AdS manifold by Ambrozio \cite{Amb} (see also \cite{KhuriKopinski}), and for graphs by Dahl-Gicquaud-Sakovich \cite{DGS} and Gir\~{a}o-de Lima \cite{GL1}. We prove the following result.

\begin{theorem}\label{foiqnoinoiqnhh1}
Let $(M^{2(n+1)},g,k)$, $n\geq 1$ be an asymptotically hyperbolic $\mathrm{SU}(n+1)$-invariant initial data set, with outermost apparent horizon boundary of area $\mathcal{A}$. If the dominant energy condition is satisfied then 
\begin{equation}\label{hypPenrose}
E_{\mathrm{hyp}} \geq \frac{1}{2}\left(\frac{\mathcal{A}}{\omega_{2n+1}}\right)^{\frac{2n}{2n+1}} + \frac{1}{2}  \left(\frac{\mathcal{A}}{\omega_{2n+1}}\right)^{\frac{2n+2}{2n+1}},
\end{equation}
and equality occurs if and only if the initial data arise from an isometric embedding into a Schwarzschild-AdS spacetime. 
\end{theorem}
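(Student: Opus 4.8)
\emph{Proof sketch.} The plan is to run the coupled inverse mean curvature flow / generalized Jang equation scheme of Bray and the first author \cites{Bray:2009oni,Bray:2009au}, exactly as in the proof of Theorem~\ref{foiqnoinoiqnhh}, while tracking the extra $d(d-1)$ term in the AdS constraint~\eqref{AdSID}. First I would solve the generalized Jang equation over $(M^{2(n+1)},g,k)$ for a graph $t=f$ with warping factor $\phi$, choosing $\phi$ so that $f$ blows up logarithmically along the outermost apparent horizon $\partial M$ while $\phi$ limits to the AdS lapse $\cosh r$ at spatial infinity (so that the Chru\'{s}ciel--Nagy energy~\eqref{CHenergy} is not changed by the deformation). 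Since $\partial M$ inherits the $\mathrm{SU}(n+1)$-symmetry, the whole construction can be carried out equivariantly: $f$ and $\phi$ depend only on the orbit-space parameter, so existence and regularity reduce to an ODE shooting problem on $[0,\infty)$ and no weak/level-set theory is required. The Jang metric $\bar g=g+\phi^2\,df^2$ is then cohomogeneity one, has an asymptotically cylindrical end over $\partial M$ with cross-sectional area exactly $\mathcal{A}$, retains the hyperbolic asymptotics of $g$ (this uses the exponential decay in~\eqref{AdSdecay} together with the extra decay imposed on $\mathrm{Tr}_g k$), and — because of the dominant energy condition — satisfies a pointwise estimate in which $R_{\bar g}+d(d-1)$ equals $16\pi(\mu-|J|_g)$ plus a manifestly nonnegative quadratic in the Jang data minus $2\,\mathrm{div}_{\bar g}X$, for an explicit vector field $X=X(f,\phi,k)$; in short, the Jang surface has scalar curvature bounded below by that of $\mathbb{H}^{d}$ modulo a divergence.

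Next I would flow $\partial M$ by inverse mean curvature in $(\bar M,\bar g)$. By cohomogeneity one the leaves $\Sigma_t$ remain among the homogeneous spheres $S^{2n+1}=\mathrm{SU}(n+1)/\mathrm{SU}(n)$ with Berger-type metrics, and the flow reduces once more to an ODE for the warping functions. Introduce the hyperbolic Hawking mass
\begin{equation}
m_H(\Sigma)=\frac{1}{2}\left(\frac{|\Sigma|}{\omega_{2n+1}}\right)^{\frac{2n}{2n+1}}\left(1+\left(\frac{|\Sigma|}{\omega_{2n+1}}\right)^{\frac{2}{2n+1}}-c_n\,|\Sigma|^{-\frac{2n}{2n+1}}\int_{\Sigma}H^2\right),
\end{equation}
with $c_n>0$ a dimensional constant; on a minimal cross-section of area $\mathcal{A}$ this equals precisely the right-hand side of~\eqref{hypPenrose}. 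The crucial point is the monotonicity $\tfrac{d}{dt}m_H(\Sigma_t)\geq 0$. Differentiating along the flow brings in the Gauss equation for $\Sigma_t\subset\bar M$, which in dimension $\geq 3$ contributes the intrinsic scalar curvature of $\Sigma_t$ rather than a topological constant; the key — as in the asymptotically flat case — is that for the homogeneous metrics on $S^{2n+1}$ these obstructing terms combine with the nonnegative Jang quadratic into a single nonnegative expression, the term $\mathrm{div}_{\bar g}X$ integrates to a boundary contribution killed by the cylindrical end and the asymptotics, and the constant $-d(d-1)$ is exactly absorbed by the $t$-derivative of the new $\left(|\Sigma|/\omega_{2n+1}\right)^{2/(2n+1)}$ term in $m_H$. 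Letting $t\to\infty$ and identifying $\lim_{t\to\infty}m_H(\Sigma_t)\leq E_{\mathrm{hyp}}$ from~\eqref{CHenergy} and the Jang asymptotics of step one yields
\begin{equation}
\frac{1}{2}\left(\frac{\mathcal{A}}{\omega_{2n+1}}\right)^{\frac{2n}{2n+1}}+\frac{1}{2}\left(\frac{\mathcal{A}}{\omega_{2n+1}}\right)^{\frac{2n+2}{2n+1}}=m_H(\partial M)\leq\lim_{t\to\infty}m_H(\Sigma_t)\leq E_{\mathrm{hyp}},
\end{equation}
which is~\eqref{hypPenrose}. For rigidity, equality throughout forces $\mu=|J|_g$, the vanishing of $X$ and of the nonnegative term in the monotonicity formula, and $H\equiv 0$ on every leaf; this pins the warping functions of $\bar g$ to those of a static slice of Schwarzschild--AdS, and unwinding the Jang construction (as in \cite[Theorem~7.46]{Leetext} for the spherical case, with the AdS reference in place of Schwarzschild) shows $(M,g,k)$ embeds isometrically into the Schwarzschild--AdS spacetime.

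The main obstacle is twofold. First, one must show that the generalized Jang equation with the prescribed logarithmic blow-up actually admits a smooth $\mathrm{SU}(n+1)$-invariant solution with both the correct asymptotically cylindrical behaviour at $\partial M$ and the correct asymptotically hyperbolic behaviour at infinity, and that $\phi$ can be chosen to realize the blow-up without altering $E_{\mathrm{hyp}}$; this is more delicate than in the asymptotically flat case precisely because of the $\cosh r$ weight in~\eqref{CHenergy}, so the decay of $f$ relative to $b$ must be controlled sharply. Second, one must verify the algebraic cancellation underlying monotonicity: writing out the second fundamental form and the intrinsic curvature of the Berger spheres in terms of the warping functions and checking that the non-sign-definite cross-terms cancel against the cosmological-constant contribution. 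This cancellation is the analogue, in the AdS setting, of the ``fortuitous cancellation'' that substitutes for the Gauss--Bonnet theorem in the asymptotically flat higher-dimensional argument, and it is where the specific homogeneous structure of $S^{2n+1}=\mathrm{SU}(n+1)/\mathrm{SU}(n)$ enters essentially.
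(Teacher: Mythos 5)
Your proposal follows essentially the same strategy as the paper: run the coupled generalized Jang equation/inverse mean curvature flow equivariantly so that everything reduces to ODEs, use the hyperbolic Hawking mass whose variation formula absorbs the $d(d-1)$ cosmological term, invoke the leafwise scalar curvature bound on the $\mathrm{SU}(n+1)$-invariant spheres (the substitute for Gauss--Bonnet), dispose of the divergence term by a boundary argument, and identify the asymptotic Hawking mass with the hyperbolic energy.

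One inaccuracy worth flagging: with the coupling $\phi=\sqrt{1-v^2}\,\rho'$ from \eqref{fouqoboiqh} and the horizon boundary condition $v(0)=\pm 1$, the Jang metric does \emph{not} develop an asymptotically cylindrical end over $\partial M$. Since $\phi(0)=0$ while $f'\sim s^{-1}$ one finds $\phi^2(f')^2\sim s^{-1}$, so the $\bar g$-distance to the boundary is finite and $\partial M$ becomes a genuine minimal-surface boundary in $(M,\bar g)$ (this is how the paper can start a smooth IMCF from there). Consequently the vanishing of the inner boundary term $\int_{\partial M}\phi X(\bar\nu)\,dV$ is not obtained from decay along a cylinder, but directly from the explicit formula $\phi X(\bar\nu)=v\rho'\bigl((\pm 1-v)H\mp\theta_\mp\bigr)$ together with the boundary value $v(0)=\pm 1$ (or $v(0)=0$ when $H(0)=0$). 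Also, in the case $H(0)=0$ one has $v(0)=0$ and $f$ does not blow up at all, so ``logarithmic blowup of $f$'' is the correct description only when $H(0)\neq 0$. These are descriptive slips rather than logical gaps; after correcting them your sketch matches the paper's argument, including the rigidity step via the vanishing of each nonnegative term in the monotonicity formula and in \eqref{barscalar}.
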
 

A different type of asymptotically hyperbolic initial data, with second fundamental form $k$ converging to $g$ rather than vanishing at infinity, appears naturally as asymptotically umbilic slices in asymptotically flat spacetimes and are referred to as asymptotically hyperboloidal.  In this setting, the Penrose inequality \cite{Wang} takes the form \eqref{foiaoinoqinhoipqjh} instead of \eqref{hypPenrose}, and the case of equality should give rise to an embedding into the Schwarzschild spacetime. This has also been established in spherical symmetry by Hou \cite{Hou}, in the time symmetric graphical context by Gir\~{a}o-de Lima \cite{GL}, and we expect that the strategy outlined in \cite{Cha:2015nja} together with arguments of the current paper will yield the $\mathrm{SU}(n+1)$-invariant case as well.

Lastly, in the Riemannian asymptotically flat context, we are able to treat the Penrose inequality for the full range of cohomogeneity one manifolds. In addition to the $\mathrm{SU}(n+1)$-invariant case, this also includes the $\mathrm{Sp}(n+1)$ and $\mathrm{Spin}(9)$-invariant cases. The proof will rely on a combination of Bray's conformal flow with inverse mean curvature flow.

\begin{theorem}\label{thm3}
Let $(M^d,g)$, $d\geq 3$ be a cohomogeneity one Riemannian manifold which is asymptotically flat, with outermost minimal surface boundary. If the scalar curvature is nonnegative $R\geq 0$, then inequality \eqref{ofinaoinoinoiqhhq} holds. Moreover, equality is achieved if and only if the manifold is isometric to the canonical slice of a Schwarzschild spacetime.
\end{theorem}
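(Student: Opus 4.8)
The plan is to run Bray's conformal flow of metrics, in the higher-dimensional form of Bray--Lee \cite{Bray:2007opu}, and to observe that cohomogeneity one symmetry trivializes the two features that otherwise confine that argument to dimensions $d\le 7$; inverse mean curvature flow enters, as in \cite{Bray,Bray:2007opu}, through the minimizing-hull bookkeeping, which in the present setting degenerates to elementary comparisons of orbit areas.

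First I would record the reduction from Section \ref{sec2}: $M^d\cong[0,\infty)\times\mathcal{G}/\mathcal{H}$ with $g=ds^2+g_s$, the orbits $\Sigma_s=\{s\}\times\mathcal{G}/\mathcal{H}$ being homogeneous spheres (round or of Ziller type), and $\Sigma_0$ the outermost minimal surface boundary. Homogeneity forces the mean curvature $H(s)$ of $\Sigma_s$ (outward normal) to be constant on the orbit and equal to $(\log A)'(s)$, where $A(s)=|\Sigma_s|$. Since $\Sigma_0$ is outermost, no orbit $\Sigma_s$ with $s>0$ can be minimal — otherwise it would be a minimal surface strictly enclosing $\Sigma_0$ — so $A'(0)=0$ while $A'>0$ on $(0,\infty)$; consequently $\mathcal{A}_h=A(0)$, minimizing hulls of orbits are orbits, and the weak inverse mean curvature flow issuing from $\Sigma_0$ is just the reparametrization $dt=H\,ds$, with no jumps.

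Next I would set up the conformal flow $g_t=u_t^{4/(d-2)}g$ exactly as in \cite{Bray,Bray:2007opu}: $u_0\equiv 1$, $\partial_t u_t=v_t$, with $v_t$ the solution of the associated elliptic problem for $\Delta_g$ exterior to the horizon $\Sigma_t$ of $g_t$. That problem is $\mathcal{G}$-invariant, so by uniqueness each $v_t$, hence each $u_t$, is $\mathcal{G}$-invariant; thus every $g_t$ remains cohomogeneity one and each horizon $\Sigma_t$ is a principal orbit, in particular a \emph{smooth} hypersurface. This is exactly where the minimal-hypersurface regularity obstruction of \cite{Bray:2007opu} (the possible singular set of codimension $\ge 7$ for area-minimizers when $d\ge 8$) evaporates; and the only remaining ingredient restricting \cite{Bray:2007opu} to $d\le 7$, the positive mass theorem with $R\ge 0$, is now available in all dimensions — either from the general results of Schoen--Yau and Lohkamp, or, within the cohomogeneity one category, from a direct Riccati-type estimate showing that $m_{\mathrm{ADM}}$, an explicit functional of the one-dimensional data $(A,g_s)$, is nonnegative once $R\ge 0$. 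With these two points the Bray--Lee conclusions go through verbatim: $R\ge 0$ and the outermost property of $\Sigma_t$ are preserved, $|\Sigma_t|_{g_t}$ is constant, $t\mapsto m_{\mathrm{ADM}}(g_t)$ is non-increasing, and $(M\setminus\Sigma_t,g_t)$ converges as $t\to\infty$ to a canonical slice of a Schwarzschild spacetime with horizon area $A(0)$, which saturates \eqref{ofinaoinoinoiqhhq}. Chaining these facts gives \eqref{ofinaoinoinoiqhhq}.

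For rigidity, equality forces $m_{\mathrm{ADM}}(g_t)$ to be constant in $t$, hence $v_t\equiv 0$ and $u_t\equiv 1$, so $(M,g)$ is already isometric to its conformal-flow limit, a canonical slice of Schwarzschild; equivalently, in the equality case one runs inverse mean curvature flow through the orbits and notes that the associated Geroch-type monotone quantity must then be constant, which for cohomogeneity one data is a rigid ordinary differential equation whose only solution is the Schwarzschild profile for $A$ and for $g_s$. The main obstacle I anticipate is confirming that the conformal-flow apparatus really closes up uniformly in every dimension once one leaves the range of \cite{Bray:2007opu}: that the weak flow, the minimizing-hull comparisons, the identification of $\Sigma_t$ as the outermost minimal surface of $g_t$, and the convergence to Schwarzschild together with the horizon-area identity all reduce to computations with smooth orbits — so that regularity of the relevant minimal surfaces is never at issue — and that this works simultaneously for the round cross-sections and for the non-round Ziller geometries on $S^{2n+1}$, $S^{4n+3}$, and $S^{15}$.
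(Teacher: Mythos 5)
Your proposal and the paper both begin with Bray's conformal flow and both observe, correctly, that cohomogeneity one symmetry makes the flow's horizons principal orbits, hence smooth, which removes the minimal-surface regularity obstruction of dimensions $\geq 8$. The divergence comes at the end: you run the conformal flow for all time and assert that $(M\setminus\Sigma_t,g_t)$ converges to a canonical slice of Schwarzschild, ``saturating'' \eqref{ofinaoinoinoiqhhq}. That convergence is precisely what the paper does \emph{not} prove, and does not need. In Bray's original 3-dimensional argument the convergence to Schwarzschild (his Theorem~13) is a substantial separate step that relies on the Gauss--Bonnet theorem; and Bray--Lee do not prove an analogous statement in higher dimensions -- they close the argument by a different mechanism, so ``the Bray--Lee conclusions go through verbatim'' is not available to you here. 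The paper records exactly this obstruction and then deliberately routes around it: it runs the conformal flow only up to a finite $t_0$, uses only the weaker fact that the horizons eventually leave every compact set (an extension of Bray's Theorem~12 that avoids Gauss--Bonnet), and then \emph{switches} to inverse mean curvature flow in $(M^d_{t_0},g_{t_0})$. The reason for the switch is the content of Propositions~\ref{prop:monotonicity1} and \ref{prop:monotonicity2}: for the $\mathrm{Sp}(n+1)$ and $\mathrm{Spin}(9)$ geometries the quantity controlling the Hawking-mass monotonicity is \emph{not} globally nonnegative -- squashing the Hopf fibers can make the orbit scalar curvature blow up -- and only becomes nonnegative once the orbits are sufficiently close to round, i.e. past some $\Sigma_{s_0}$ in the asymptotic end. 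Your proposal never engages with this failure, which is the central difficulty that distinguishes Theorem~\ref{thm3} from the $\mathrm{SU}(n+1)$ case and forces the hybrid structure.

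Two smaller points. First, the mass decrease law for the conformal flow uses a positive mass theorem \emph{with corners} on a doubled manifold; citing the smooth Schoen--Yau/Lohkamp results does not immediately supply this. The paper instead invokes the spinorial corners theorem of Shi--Tam, which applies because $M^d_t\cong[0,\infty)\times S^{d-1}$ is spin. Your alternative suggestion of ``a direct Riccati-type estimate'' within the cohomogeneity one category is plausible in spirit but is not an argument. Second, your opening invocation of IMCF as ``minimizing-hull bookkeeping'' within the conformal flow is a confusion of the two flows; the paper uses IMCF only after the conformal flow stops, and uses it for Hawking-mass monotonicity, not for hull comparisons.
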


This article is organized as follows. In Section \ref{sec2} we describe cohomogeneity one initial data sets in detail, while in Section \ref{sec3} existence is established for the coupled Jang-IMCF system of equations in this setting. The proofs of Theorems \ref{foiqnoinoiqnhh}, \ref{foiqnoinoiqnhh1}, and \ref{thm3} are then given in Sections \ref{sec4}, \ref{sec5}, and \ref{sec6}. Finally, two appendices are provided with the first 
showing that linear momentum vanishes in the asymptotically flat case, and the second
exhibiting examples of relevant initial data.

\subsection*{Acknowledgements}
The authors would like to thank McKenzie Wang for helpful comments.

\section{Cohomogeneity One Initial Data Sets}
\label{sec2} \setcounter{equation}{0}
\setcounter{section}{2}

Consider a cohomogeneity one Riemannian manifold $(M^d,g)$ which is asymptotically flat or asymptotically hyperbolic, and possesses a smooth outermost apparent horizon boundary. If $\mathcal{G}$ is the compact connected Lie group of isometries giving rise to the cohomogeneity one structure, then as discussed in the introduction the manifold is diffeomorphic to a product $M^d\cong [0,\infty) \times \mathcal{G}/\mathcal{H}$, where $\mathcal{H}$ is a principal isotropy subgroup. To see this, first note that smooth outermost apparent horizons inherit the symmetries of the initial data from which they arise \cite[Lemma 3.1]{BKS} (\cite[Theorem 8.1]{AMS}, \cite{PS}), more precisely $\partial M^d$ is left invariant by $\mathcal{G}$. Therefore, we have a well-defined $\mathcal{G}$-action on $M^d \setminus\partial M^d$, and may then apply the principal orbit theorem to conclude that the orbit space $(M^d \setminus \partial M^d) / \mathcal{G}$ is diffeomorphic to a open half line $(0,\infty)$, with the origin corresponding to the apparent horizon, and thus $M^d \setminus \partial M^d \cong (0,\infty)\times \mathcal{G}/\mathcal{H}$. The desired conclusion now follows.

We next claim that in light of the asymptotics, the orbits $\mathcal{G}/\mathcal{H}$ must topologically be the sphere $S^{d-1}$. Note that if $s:M^d \rightarrow\mathbb{R}_+$ is the distance function to the boundary $\partial M^{d-1}$, then $g=ds^2 +g_s$ where $g_s$ is a 1-parameter family of $\mathcal{G}$-invariant metrics on $\mathcal{G}/\mathcal{H}$. Let $\Sigma_s$ denote the $s$-level set, then there exists a sufficiently large $s_0$ such that $\Sigma_{s_0}$ lies completely within the asymptotic end $M^d \setminus\mathcal{C}$. If not, then there exists a sequence of distances $s_i \rightarrow\infty$ with $|s_i -s_j|>1$ for $i\neq j$ such that $\Sigma_{s_i}\cap\mathcal{C}\neq \emptyset$. Since $\mathcal{C}$ is compact, a sequence of points $p_i \in \Sigma_{s_i}\cap\mathcal{C}$ must have a convergent subsequence. However this contradicts 
$|s_i -s_j|>1$, and we conclude that there is a $\Sigma_{s_0}\subset M^d\setminus \mathcal{C}$.
Because $\Sigma_{s_0}$ is compact, there exists a large coordinate sphere $S_{r_0}\subset\mathbb{R}^d$ whose preimage under the asymptotic diffeomorphism $\varphi$ separates $\Sigma_{s_0}$ from infinity. 
Let $\tilde{\Omega}_{s_0 r_0}$ be the image of $\Omega_{s_0 r_0}$ inside $\mathbb{R}^d \setminus B_1$, where $\Omega_{s_0 r_0}$ is the region bounded between $\Sigma_{s_0}$ and $\varphi^{-1}(S_{r_0})$.
By flowing along radial lines in $\mathbb{R}^d$, we may continuously deform $\tilde{\Omega}_{s_0 r_0}$ into $S_{r_0}$. This induces a deformation retract of $M_{s_0}^d$ onto $M_{r_0}^d$, where these are the closure of the noncompact component of $M^d \setminus \Sigma_{s_0}$ and $M^d \setminus \varphi^{-1}(S_{r_0})$, respectively. Since $M_{s_0}^d$ is homotopy equivalent to $\Sigma_{s_0}$ and $M_{r_0}^d$ is homotopy equivalent to $\varphi^{-1}(S_{r_0})$, it follows that
$\Sigma_{s_0}$ is a homotopy sphere. Therefore, by the resolution of the generalized Poincar\'{e} conjecture we have that $\mathcal{G}/\mathcal{H}=\Sigma_{s_0}$ is homeomorphic to the standard sphere $S^{d-1}$. Moreover, since exotic spheres do not admit smooth homogeneous space structures \cite[Theorem 1.1]{Schultz}, it follows that $\mathcal{G}/\mathcal{H}$ is diffeomorphic $S^{d-1}$.

The classification of homogeneous metrics on spheres has been given by Ziller \cite[page 352]{Ziller}. In all dimensions there is a unique $\mathrm{SO}(d)$-invariant metric (up to homothety) on $S^{d-1} = \mathrm{SO}(d)/ \mathrm{SO}(d-1)$, of constant curvature. This leads to spherically symmetric initial data, a setting in which the spacetime Penrose inequality has already been established. In odd dimensions the Hopf fibrations $S^1 \hookrightarrow S^{2n+1} \to \mathbb{CP}^n$ give rise to a 1-parameter family or 2-parameter family of $\mathrm{SU}(n+1)$-invariant metrics on $S^{2n+1}=\mathrm{SU}(n+1)/\mathrm{SU}(n)$ depending on whether $n>1$ or $n=1$, respectively. For $n>1$, these metrics have an extra $\mathrm{U}(1)$ symmetry, and will be discussed in detail below. The remaining cases include a 3-parameter family of metrics on 
$S^{4n+3} = \mathrm{Sp}(n+1) / \mathrm{Sp}(n)$ and a 1-parameter family of metrics on  $S^{15} = \mathrm{Spin}(9) / \mathrm{Spin}(7)$ up to homothety, which will be examined further in Section \ref{sec6}.

In what follows, an initial data set $(M^d,g,k)$ will be referred to as $\mathcal{G}$-\textit{invariant} if the Riemannian manifold $(M^d,g)$ is cohomogeneity one with respect to $\mathcal{G}$, and the Lie derivative vanishes $L_{\eta}k=0$ for any Killing field $\eta$ associated with the isometries of $g$.

\subsection{$\mathrm{SU}(n+1)$-invariant initial data, $n>1$}

The 1-parameter family of Berger metrics on the `squashed' sphere $S^{2n+1}$ are given by
\begin{equation}\label{Berger}
g_{\lambda} = \lambda (d \psi + A)^2 + g_{FS},
\end{equation} 
where $\psi$ is a $2\pi$-periodic coordinate on the circles of the Hopf fibration, $g_{FS}$ is the Fubini-Study metric on $\mathbb{CP}^n$ scaled so that $\text{Ric}(g_{FS}) = 2(n+1) g_{FS}$, and $A$ is a connection 1-form such that $\omega = dA/2$ is the associated K\"ahler form. The normalization is chosen so that $g_1$ is the unit round metric on $S^{2n+1}$. Note that the squashing parameter $\lambda$ controls the size of the $S^1$ fibers in the fibration, and the metric
\eqref{Berger} is invariant under the $\mathrm{U}(1)$ isometry generated by the Killing field $\partial_\psi$. Moreover, since the round metric inherits the subgroup of isometries $\mathrm{SU}(n+1)$ from $\mathbb{C}^{n+1}$ and the action descends by isometries to the complex projective space quotient, we find that $L_\eta (d\psi+A)=0$ for any Killing field $\eta$ associated with the $\mathrm{SU}(n+1)$ symmetry. It follows that the Berger metrics inherit this symmetry, and they are the building blocks of $\mathrm{SU}(n+1)$-invariant initial data.

\begin{prop}\label{aofnoaigopinjopinhipoq}
Let $n>1$. Consider an asymptotically flat or asymptotically hyperbolic $\mathrm{SU}(n+1)$-invariant initial data set $(M^{2(n+1)},g,k)$, with outermost apparent horizon boundary. Then $M^{2(n+1)}\cong [0,\infty) \times S^{2n+1}$ and the metric and extrinsic curvature take the form
\begin{align}\label{aonfoih}
\begin{split}
g =& ds^2 + \rho(s)^2 \left[ e^{-4nB(s)} (d\psi  + A)^2 + e^{2B(s)} g_{FS} \right], \\
k =& k_a ds^2 + 2 k_s \rho e^{-2nB} ds (d\psi + A) + \rho^2 \left[ k_b e^{-4nB} (d\psi  + A)^2 + k_c e^{2B} g_{FS} \right],
\end{split}
\end{align} 
for some smooth functions $B$, $k_a$, $k_b$, $k_c$, $k_s$, and $\rho>0$ of $s$ alone. In an asymptotically flat end
\begin{equation}\label{fnoanoih}
\rho(s)=s +O_2(s^{1-\tau}), \quad B=O_2(s^{-\tau}), \quad k_a, k_b, k_c, k_s =O_1(s^{-\tau-1}),\quad \mathrm{Tr}_g k=O_1(s^{-2\tau-1}),
\end{equation}
while in an asymptotically hyperbolic end 
\begin{equation}\label{asymptoticallyhyperbolic}
\rho(s)=\sinh s +O_2(e^{(1-q)s}), \quad B=O_2(e^{-qs}), \quad k_a, k_b, k_c, k_s =O_1(e^{-qs}).
\end{equation}
\end{prop}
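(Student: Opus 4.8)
The plan is to start from the general cohomogeneity one form $g = ds^2 + g_s$ and the structure theorem $M^{2(n+1)}\cong [0,\infty)\times S^{2n+1}$ already established at the top of Section~\ref{sec2}, and then use the additional $\mathrm{SU}(n+1)$-invariance together with Ziller's classification to pin down the metric $g_s$ on each orbit. First I would observe that for each fixed $s>0$, $g_s$ is an $\mathrm{SU}(n+1)$-invariant metric on $S^{2n+1}=\mathrm{SU}(n+1)/\mathrm{SU}(n)$; by Ziller's classification (quoted above) such a metric, when $n>1$, lies in the $1$-parameter family of Berger metrics \eqref{Berger}, so $g_s = \mu(s)\,(d\psi+A)^2 + \nu(s)\,g_{FS}$ for positive functions $\mu,\nu$. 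Re-parametrizing, write $\nu(s) = \rho(s)^2 e^{2B(s)}$ and $\mu(s) = \rho(s)^2 e^{-4nB(s)}$; this is a bijective change of unknowns $(\mu,\nu)\leftrightarrow(\rho,B)$ onto the region $\mu,\nu>0$ (explicitly $\rho^{2n+1}=\nu^{n}\mu^{1/2}$ up to normalization so that $\rho$ measures the geometric mean radius, and $e^{(2n+1)B}=(\nu/\mu)^{?}$), and it is chosen precisely so that the volume form of $g_s$ equals $\rho^{2n+1}$ times that of the round metric, which is the normalization that later makes the Hawking-type mass monotone. Smoothness of $g$ as a tensor on the product forces $\rho,B\in C^\infty$; near $s=0$ one would invoke the standard smoothness conditions for cohomogeneity one metrics at a singular orbit, but since the orbit there is the outermost apparent horizon — still a principal orbit $S^{2n+1}$, i.e.\ $s=0$ is a boundary rather than a collapsed orbit — this is just smoothness up to the boundary.

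Next I would derive the form of $k$. Since $L_\eta k = 0$ for every Killing field $\eta$ generating the $\mathrm{SU}(n+1)$-action, $k$ restricted to each orbit is an $\mathrm{SU}(n+1)$-invariant symmetric $2$-tensor on $S^{2n+1}$, and the mixed $ds$-orbit part is an invariant $1$-form on $S^{2n+1}$. The space of invariant symmetric $2$-tensors is spanned by $(d\psi+A)^2$ and $g_{FS}$ (same isotropy representation computation as for metrics: the isotropy rep of $\mathrm{SU}(n)$ on $T_o S^{2n+1}$ splits as a trivial summand plus $\mathbb{C}^n$, whose symmetric square of invariants is $2$-dimensional), and the space of invariant $1$-forms is spanned by $d\psi+A$ (the connection form), since $\mathbb{CP}^n$ carries no invariant $1$-form and the fibre direction contributes one. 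This yields $k = k_a\,ds^2 + 2k_s\rho e^{-2nB}\,ds\,(d\psi+A) + \rho^2[k_b e^{-4nB}(d\psi+A)^2 + k_c e^{2B}g_{FS}]$ with $k_a,k_b,k_c,k_s$ functions of $s$; the factors of $\rho$ and the exponentials are inserted only so that, e.g., $k_s$, $k_b$, $k_c$ are the components of $k$ with respect to a $g$-orthonormal coframe, which is convenient for the asymptotics and for reading off $\mathrm{Tr}_g k = k_a + k_b + 2n k_c$.

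Finally, the asymptotic expansions \eqref{fnoanoih} and \eqref{asymptoticallyhyperbolic} follow by translating the coordinate-chart decay hypotheses \eqref{1.3}, resp.\ \eqref{AdSdecay}, into the $(s,\psi,\mathbb{CP}^n)$ coordinates. Here $s$ is $g$-distance to the boundary; comparing with the model distance ($|x|$ in the flat case, geodesic radius in $\mathbb{H}^d$), the difference $s - |x|$ (resp.\ $s - r$) is controlled by integrating $|g-\delta| = O(|x|^{-\tau})$ (resp.\ $|g-b|=O(e^{-qr})$) along the normal, giving $|x| = s + O(s^{1-\tau})$, and then $\rho$ — the normalized orbit radius — satisfies $\rho(s) = s + O_2(s^{1-\tau})$ (resp.\ $\sinh s + O_2(e^{(1-q)s})$) because the squashing function $e^{(2n+1)B}$ tends to $1$ at the rate dictated by the off-diagonal/anisotropic part of $g - \delta$, i.e.\ $B = O_2(s^{-\tau})$ (resp.\ $O_2(e^{-qs})$); the $k$-components inherit the decay of $\varphi_*k$ directly, and the extra trace decay $\mathrm{Tr}_g k = O_1(s^{-2\tau-1})$ is exactly the extra hypothesis imposed in \eqref{1.3}. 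The main obstacle — really the only nonroutine point — is the smoothness/compatibility of the parametrization $(\mu,\nu)\mapsto(\rho,B)$ and of $k$ at the boundary orbit $s=0$ and the careful bookkeeping of which geometric normalization of $\rho$ and $B$ makes the later monotonicity argument work; everything else is a direct unwinding of definitions and the cited classification results.
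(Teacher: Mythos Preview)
Your proposal is correct and follows essentially the same approach as the paper. The only difference worth noting is in the justification of the form of $k$: you invoke the isotropy representation decomposition $T_o S^{2n+1}\cong \mathbb{R}\oplus\mathbb{C}^n$ and read off the invariant $1$- and symmetric $2$-tensors directly, whereas the paper gives a more hands-on argument---observing that a nontrivial $\mathrm{SU}(n+1)$-invariant $1$-form $\sigma$ on $\mathbb{CP}^n$ would yield a second invariant metric $g_{FS}+\lambda\sigma^2$, contradicting uniqueness of the Fubini--Study metric, and similarly that any invariant symmetric $2$-tensor on $\mathbb{CP}^n$ must be a multiple of $g_{FS}$---but these are two phrasings of the same representation-theoretic fact, and the remaining steps (the change of variables to $(\rho,B)$, and the comparison with the model metric to extract the decay) are the same.
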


\begin{proof}
According to the discussion at the beginning of this section, and the classification of \cite{Ziller}, the manifold is topologically $M^{2(n+1)}=[0,\infty)\times S^{2n+1}$ and
the metric is given by
\begin{equation}\label{GGmetric}
g = ds^2 + P(s)^2 (d\psi + A)^2 + Q(s)^2 g_{FS}
\end{equation}
for some smooth positive functions $P(s)$ and $Q(s)$. We may then set
\begin{equation}
\rho(s): = \left[P(s) Q(s)^{2n} \right]^{\frac{1}{2n+1}}, \qquad e^{B(s)}:=\left(\frac{Q(s)}{P(s)}\right)^{\frac{1}{2n+1}},
\end{equation} 
so that
\begin{equation}
Q(s) = e^{B(s)} \rho(s), \qquad P(s) = e^{-2nB(s)} \rho(s).
\end{equation} 
The desired expression for the metric now follows.

Consider now the structure of $k$. It is useful to introduce the orthonormal coframe
\begin{equation}\label{frame}
\mathbf{e}^1 = ds, \qquad \mathbf{e}^2 = \rho(s) e^{-2n B(s)}(d\psi + A), \qquad \mathbf{e}^i = \rho(s) e^{B(s)} \hat{e}^i, \quad i=3,\ldots,2(n+1),
\end{equation} 
where $\hat{e}^i$ are members of an orthornoaml coframe for the Fubini-Study metric. The dual basis vectors are given by
\begin{equation}\label{foanofinh}
\mathbf{e}_1 = \partial_s, \qquad \mathbf{e}_2 = \rho(s)^{-1}e^{2nB(s)} \partial_\psi, \qquad \mathbf{e}_i = \rho(s)^{-1} e^{-B(s)} \left(\hat{e}_i - A(\hat{e}_i) \partial_\psi \right).
\end{equation} 
We may then expand the extrinsic curvature as $k = k_{\mathrm{ij}}\mathbf{e}^{\mathrm{i}} \otimes \mathbf{e}^{\mathrm{j}}$ for some symmetric matrix of functions $k_{\mathrm{ij}}$, with $\mathrm{i},\mathrm{j}=1,\ldots,2(n+1)$. First observe that $k_{\mathrm{ij}}$ has no dependence on $\psi$, since the Killing field $\partial_{\psi}$ commutes with the frame \eqref{foanofinh}. Next, note that $g_{FS}$ is the unique $\mathrm{SU}(n+1)$-invariant metric (up to a scaling) on $\mathbb{CP}^n$.  This implies that there can be no nontrivial $\mathrm{SU}(n+1)$-invariant 1-forms $\sigma$ on $\mathbb{CP}^n$, for otherwise $g_{FS} + \lambda \sigma^2$ would produce another invariant family of metrics.  It follows that an invariant $k$ cannot contain terms of the form $\mathbf{e}^1 \otimes \mathbf{e}^i$ or $\mathbf{e}^2 \otimes \mathbf{e}^i$ for any $i>2$. To see this, write $k_{\ell i}\mathbf{e}^{1}\otimes \mathbf{e}^{i}=\mathbf{e}^{\ell}\otimes K_\ell$
where $K_\ell=k_{\ell i}\mathbf{e}^{i}$ and $\ell=1,2$. Let $\eta$ be a Killing field associated with the $\mathrm{SU}(n+1)$ symmetry, then $L_{\eta}(\mathbf{e}^{\ell}\otimes K_\ell)=\mathbf{e}^{\ell}\otimes L_{\eta}K_{\ell}$. However, since $L_{\eta}K_{\ell}$ is a linear combination of
$\mathbf{e}^{i}$ for $i>2$, there is no possibility of canceling this term with any other expression in $L_{\eta}k$. We then have that $L_{\eta}K_{\ell}=0$. Thus, $K_{\ell}$ is a $\mathrm{SU}(n+1)$-invariant 1-form as $\eta$ was arbitrary, so that $K_{\ell}=0$.  From this, one may further conclude that $k_{11}$, $k_{12}$, and $k_{22}$ depend only on $s$, and that $\alpha=\sum_{i,j>2}k_{ij}e^i \otimes e^j$ is $\mathrm{SU}(n+1)$-invariant. The latter statement implies that $\alpha$ must be a multiple of $g_{FS}$, otherwise for small $\varepsilon$ the expression $g_{FS}+\varepsilon \alpha$ would furnish an invariant metric on $\mathbb{CP}^n$ in violation of the uniqueness property of the Fubini-Study metric.  Putting this all together yields the desired structure for $k$.

Lastly, to obtain the fall-off in \eqref{fnoanoih}, write the Euclidean metric of \eqref{1.3} in polar form with the coordinate spheres expressed with respect to the Hopf fibration, and compare  with the coefficients of $\varphi_{*}g$. This gives relations between the two sets of coordinates and the quantities $\rho$, $B$, from which the desired decay may be derived. An analogous procedure applies to the extrinsic curvature. Moreover, the asymptotically hyperbolic case is treated similarly.
\end{proof}

It will be useful to record some facts about the class of metrics \eqref{aonfoih}. First note that the $s$-level set spheres $\Sigma_s$ have volume form
\begin{equation}\label{volumeform}
dV_\Sigma=\rho^{2n+1} d\psi \wedge dV_{\mathbb{CP}^n} = \rho^{2n + 1} dV_{S^{2n+1}},
\end{equation} 
and scalar curvature
\begin{equation}\label{rsigma}
R_{\Sigma} =  - \frac{2n P^2}{Q^4} + \frac{4n(n+1)}{Q^2} = -\frac{2n}{\rho^2} \left(e^{-4(n+1)B} - 2(n+1)e^{-2B} \right),
\end{equation} 
while the scalar curvature of $g$ as expressed in \eqref{GGmetric} is given by
\begin{equation}
R = -\frac{4n Q''}{Q} - \frac{2 P''}{P} - \frac{2n P^2}{Q^4} - \frac{4n P' Q'}{PQ} + \frac{4n(n+1)}{Q^2} - \frac{2n(2n-1) Q'^2}{Q^2}
\end{equation}  
where the prime notation indicates differentiation with respect to $s$. 
Furthermore
\begin{equation}\label{tracek12}
\mathrm{Tr}_g k = k_a + k_b + 2n k_c, \qquad \mathrm{Tr}_{\Sigma}k = k_b + 2n k_c,
\end{equation} 
and the mean curvature of the level sets is
\begin{equation}
H_\Sigma = (2n+1) \frac{\rho'}{\rho}.
\end{equation}

\subsection{$\mathrm{SU}(2)$-invariant initial data}\label{section2.2}

For $n=1$, $S^3$ is diffeomorphic to the Lie group $\mathrm{SU}(2)$ and hence there exist homogeneous metrics invariant under the left or right action of $\mathrm{SU}(2)$ that do not possess the enhanced $\mathrm{U}(1)$ isometry.  Consider the following right-invariant 1-forms \cite[Appendix A]{Lucietti}
\begin{equation}
\sigma^1 = \sin\psi d \theta - \cos\psi \sin\theta d \phi, \quad \sigma^2 = \cos\psi d \theta + \sin\psi \sin\theta d \phi, \quad \sigma^3 = d \psi + \cos\theta d \phi,
\end{equation} 
where $\theta \in (0,\pi), \phi \in (0,2\pi)$, and $\psi \in (0,4\pi)$. These satisfy
$d\sigma^i = -\tfrac{1}{2} \epsilon_{ijk} \sigma^j \wedge \sigma^k$, where $\epsilon_{ijk}$ is totally antisymmetric and $\epsilon_{123}=1$. We then have the following $\mathrm{SU}(2)$-invariant metrics on the 3-sphere $g_{SU}=h_{ij} \sigma^i \sigma^j $, in which
$h$ is a constant positive definite matrix. Note that $h_{ij}=\tfrac{1}{4}\delta_{ij}$ gives rise to the unit round metric. Furthermore, an orthogonal frame may be obtained with a diagonalizing $O\in\mathrm{SO}(3)$ by setting $\hat{\sigma}^i \to O^i_{j} \sigma^j$. It follows that
\begin{equation}\label{diagSU(2)}
g_{SU} = \frac{1}{4} \left(c_1^2 (\hat{\sigma}^1)^2 + c_2^2 (\hat{\sigma}^2)^2 + c_3^2 (\hat{\sigma}^3)^2 \right),
\end{equation} 
where $c_i > 0$ are positive constants. The Berger class of $\mathrm{SU}(2) \times \mathrm{U}(1)$-invariant metrics is obtained by setting $c_1 = c_2$; in this case the additional Killing field is given by $\partial_\psi$.  A computation shows that the volume form is $dV_{g_{SU}} = c_1 c_2 c_3 dV_{g_{S^3}}$, and the scalar curvature is given by
\begin{equation}\label{suscalarcurv}
R_{SU} = \frac{2}{c_1^2 c_2^2 c_3^2} \left(2 c_1^2 (c_2^2 + c_3^2) - c_1^4 - (c_2^2 - c_3^2)^2 \right).
\end{equation} 

Consider now $\mathrm{SU}(2)$-invariant initial data $(M^4,g,k)$ satisfying the hypotheses of  Proposition \ref{aofnoaigopinjopinhipoq}. Then $M^4 \cong [0,\infty) \times S^3$, the metric takes the form
\begin{equation}
g = ds^2 + \frac{1}{4} \left(c_1^2(s) (\hat{\sigma}^1)^2 + c_2^2(s) (\hat{\sigma}^2)^2 + c_3^2(s) (\hat{\sigma}^3)^2 \right)
\end{equation}
where $\hat{\sigma}_i$ also depends on $s$ but only through the orthogonal transformation $O$, and the extrinsic curvature may be expressed as
\begin{equation}
k = k_a(s) ds^2 + 2 \hat{k}_i(s) e^i ds + \hat{k}_{ij}(s) e^i e^j
\end{equation} 
for some smooth functions $k_a$, $\hat{k}_i$, and $\hat{k}_{ij}$ of $s$ alone, where $e^i(s)=(c_i(s)/2)\hat{\sigma}^i$ is an orthonormal frame for each coordinate sphere. Moreover, as in the proof of Proposition \ref{aofnoaigopinjopinhipoq}, it may be shown that in an asymptotically flat end
\begin{equation}\label{fnoanoih222}
c_i(s)=s +O_2(s^{1-\tau}), \quad\quad\quad k_a(s), \hat{k}_i(s), \hat{k}_{ij}(s) =O_1(s^{-\tau-1}),
\end{equation}
while in an asymptotically hyperbolic end 
\begin{equation}\label{asymptoticallyhyperbolic222}
c_i(s)=\sinh s +O_2(e^{(1-q)s}), \quad\quad \quad  k_a(s), \hat{k}_i(s), \hat{k}_{ij}(s) =O_1(e^{-qs}).
\end{equation}


\section{The Cohomogeneity One Generalized Jang Equation}
\label{sec3} \setcounter{equation}{0}
\setcounter{section}{3}

The goal of this section is to obtain existence and asymptotics for solutions of a coupled system involving inverse mean curvature flow and the generalized Jang equation, in the setting of $\mathrm{SU}(n+1)$-invariant initial data sets $(M^{2(n+1)},g,k)$. Recall that the generalized Jang equation \cites{Bray:2009oni,Bray:2009au} associated with the data set is given by 
\begin{equation}
\left(g^{ij} - \frac{\phi^2 f^i f^j}{ 1 + \phi^2 |\nabla f|^2}\right)\left(\frac{\phi \nabla_{ij} f + \phi_i f_j +\phi_j f_i}{\sqrt{1 + \phi^2 |\nabla f|^2}} - k_{ij} \right) = 0,
\end{equation} 
where $f^{i}=g^{ij}f_j$, $f_j=\partial_j f$, and $\nabla$ is the Levi-Civita connection. Utilizing the structure of the metric and extrinsic curvature from Proposition \ref{aofnoaigopinjopinhipoq}, and assuming that $\phi$, $f$ are functions of $s$ alone, we find that this equation is equivalent to 
\begin{equation}\label{fobqoboibqihq}
v' + \frac{(2n+1) \rho'}{\rho}v +   \left(\frac{\phi'}{\phi} v -k_a\right)(1-v^2) - k_b - 2n k_c =0,
\end{equation} 
where prime indicates differentiation with respect to $s$ and
\begin{equation}
v= \frac{\phi f'}{\sqrt{1 + \phi^2 f'^2}}.
\end{equation}
Since inverse mean curvature flow emanating from a $\mathrm{SU}(n+1)$-invariant apparent horizon remains $SU(n+1)$-invariant, following the proposal in \cites{Bray:2009oni,Bray:2009au} to couple the generalized Jang equation with inverse mean curvature flow leads to
\begin{equation}\label{fouqoboiqh}
\phi = \frac{d\rho}{d\bar{s}}=\sqrt{1-v^2}\rho'=\left(\frac{|\Sigma_{\bar{s}}|}{\omega_{2n+1}}\right)^{\frac{1}{2n+1}}\frac{\bar{H}}{2n+1},
\end{equation}
where $\Sigma_{\bar{s}}$ is the $\bar{s}$-level set surface with mean curvature $\bar{H}$ with respect to the Jang metric $\bar{g}=g+\phi^2 df^2$, $|\Sigma_{\bar{s}}|$ is the area, and $\bar{s}$ is the corresponding radial arclength parameter 
\begin{equation}
\bar{s}= \int_0^s \sqrt{1 + \phi^2 f'^2} = \int_0^s \frac{1}{ \sqrt{1 - v^2}}.
\end{equation} 
This choice of $\phi$ ensures monotonicity of the Hawking mass along inverse mean curvature flow in the  Jang manifold $(M^{2(n+1)},\bar{g})$.

The boundary of the initial data is assumed to be an outermost apparent horizon, and therefore we must have
\begin{equation}
0<\theta_{\pm}=H \pm \text{Tr}_{\Sigma_s} k = (2n+1) \frac{ \rho'}{\rho} \pm \left(k_b + 2n k_c\right)
\end{equation}
for $s>0$, and $\theta_+ (0) =0$ and/or $\theta_-(0) =0$ depending on whether $\Sigma_0=\partial M^{2(n+1)}$ is a future and/or past horizon. It follows that the $g$-mean curvature of radial surfaces is positive $H=\tfrac{1}{2}(\theta_+ +\theta_-)>0$ away from the boundary, and hence on this region $\phi>0$ as long as $f'$ is bounded. Inserting the expression \eqref{fouqoboiqh} for $\phi$ into the generalized Jang equation \eqref{fobqoboibqihq} and rearranging terms produces
\begin{equation}\label{foqoifnoiwhg}
(1-v^2) v' + (1-v^2)F_\mp (s,v)  \pm \theta_\mp = 0,
\end{equation} 
where
\begin{equation}
F_\mp (s,v):= \mp \frac{(2n+1)}{1 \pm v} \frac{\rho'}{\rho} + \frac{ v \rho''}{\rho'} - k_a. 
\end{equation}

\begin{remark} 
The above computations have been performed for $\mathrm{SU}(n+1)$-invariant initial data with $n > 1$. In particular, Proposition \ref{aofnoaigopinjopinhipoq} has been utilized. For the special case when $n=1$, we may appeal to Section 2.2 to obtain an analogous equation \eqref{foqoifnoiwhg}. 
In what follows we will treat the two cases simultaneously without noting differences when $n=1$, 
as the modifications needed for this dimension are straightforward.
\end{remark}

\begin{theorem}\label{jangexist}
Let $(M^{2(n+1)},g,k)$, $n\geq 1$ be an asymptotically flat or asymptotically hyperbolic $\mathrm{SU}(n+1)$-invariant initial data set, with outermost apparent horizon boundary.
If the boundary mean curvature satisfies $H(0)\neq 0$, then given $\alpha\in(-1,1)$ there
exists a unique solution $v\in C^{1}([0,\infty))\cap C^{\infty}((0,\infty))$ of \eqref{foqoifnoiwhg} such that $-1<v(s)<1$ for $s>0$ and $v(0)=\alpha$. This remains true when $\alpha= \pm 1$ if $\partial M^{2(n+1)}$ is a past (future) apparent horizon, respectively. If $H(0)=0$, then the same existence statement holds with $v(0)=0$ and $v\in C^{0}([0,\infty))\cap C^{\infty}((0,\infty))$. Furthermore, in all cases the solution admits the following decay 
\begin{equation}
|v(s)|+s|v'(s)|\leq Cs^{-2\tau_0}
\end{equation}
in the asymptotically flat setting for any $\tau_0 <\min\{\tau,n+\tfrac{1}{2}\}$, and the decay
\begin{equation}\label{vfall}
|v(s)|+|v'(s)|\leq Ce^{-q_0 s}
\end{equation}
in the asymptotically hyperbolic setting for any $q_0<\min\{q,2n+2\}$, where $C$ is a constant depending on $g$ (up to second derivatives) and $k$ as well as on $\tau_0$ or $q_0$, respectively.
\end{theorem}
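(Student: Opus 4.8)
The plan is to treat \eqref{foqoifnoiwhg} as a scalar first-order ODE and solve it by a continuation argument, handling separately the singular endpoint $s=0$, the extension to all of $(0,\infty)$, and the decay as $s\to\infty$. Solving for $v'$ one gets $v'=\Psi_{\mp}(s,v):=-F_{\mp}(s,v)\mp\theta_{\mp}(s)/(1-v^2)$, and $\Psi_{\mp}$ is smooth on $(0,\infty)\times(-1,1)$: $\rho>0$ everywhere since $\rho(0)^{2n+1}\omega_{2n+1}=\mathcal{A}>0$, $\rho'>0$ for $s>0$ because $H=(2n+1)\rho'/\rho=\tfrac12(\theta_{+}+\theta_{-})>0$ there, and $\theta_{\mp}>0$ for $s>0$ by the outermost horizon hypothesis. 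For an interior initial value, Picard--Lindel\"of together with a bootstrap in the equation then gives a unique smooth solution on a maximal interval, and it remains to (i) produce the solution with the prescribed value at $s=0$ and the stated regularity, (ii) extend it to $[0,\infty)$, and (iii) derive the decay.

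For (i), when $H(0)\neq0$ one has $\rho'(0)>0$, so $\rho''/\rho'$ is bounded near $0$ and, for $v$ in a compact subinterval of $(-1,1)$, $\Psi_{\mp}$ extends to $s=0$ as a function smooth in $v$ and (as regular as the data permit) in $s$, using $\theta_{\mp}(0)=0$ to control $\theta_{\mp}/(1-v^2)$; standard ODE theory then furnishes the unique $C^1$ solution with $v(0)=\alpha\in(-1,1)$. The genuinely singular subcases --- which I expect to be the main obstacle --- are $\alpha=\pm1$ and $H(0)=0$. If $\partial M^{2(n+1)}$ is a past (future) horizon, so $\theta_{\mp}(0)=0$, the corresponding version of the equation admits $v(0)=\pm1$: setting $w=1\mp v>0$, using $1-v^2\approx 2w$ and $\theta_{\mp}(s)=\theta_{\mp}'(0)\,s+O(s^2)$ reduces the equation near $s=0$ to the model $2ww'+(2n+1)\tfrac{\rho'(0)}{\rho(0)}w=\theta_{\mp}'(0)\,s+\cdots$, whose admissible behaviour is $w\sim\beta s$ with $\beta>0$ the positive root of $2\beta^2+(2n+1)\tfrac{\rho'(0)}{\rho(0)}\beta=\theta_{\mp}'(0)$; a contraction mapping (or sub/super-solution) argument on a short interval in the weighted class $|1\mp v|\le Cs$ produces a genuine solution, and uniqueness follows from a Gronwall estimate exploiting the sign of the $(2n+1)\rho'/\rho$ coefficient. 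When $H(0)=0$ (both a past and a future horizon), $\rho'(0)=0$ and $\rho''/\rho'\sim 1/s$ is singular; here any bounded solution satisfies $v(0)=0$ (from $(sv)'=sv'+v$ and the equation one gets $sv(s)\to0$), the solution is only $C^0$ up to the boundary, and uniqueness comes from the dissipative sign of the singular term $-v\rho''/\rho'\approx-v/s$ via a weighted Gronwall argument.

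For (ii) the point is a confinement estimate: $v$ cannot reach $\pm1$ in finite time. Clearing the factor $1-v^2$ in \eqref{foqoifnoiwhg} and letting $v\to\mp1$, the contribution of $F_{\mp}$ tends to a finite multiple of $\rho'/\rho$, and using $\theta_{\mp}=(2n+1)\rho'/\rho\mp(k_b+2nk_c)$ one finds, in either version of the equation and for every $s>0$, that $(1-v^2)v'\to\theta_{+}(s)>0$ as $v\to-1^{+}$ and $(1-v^2)v'\to-\theta_{-}(s)<0$ as $v\to1^{-}$. Hence $v$ is driven back into the interior whenever it approaches an endpoint, so on any finite interval $[0,S]$ the solution stays in a fixed compact subinterval of $(-1,1)$ with $v'$ bounded; it therefore extends, and the maximal interval is $[0,\infty)$.

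For (iii), substituting $\phi=\sqrt{1-v^2}\,\rho'$ from \eqref{fouqoboiqh} into \eqref{fobqoboibqihq} and using the cancellation $k_a+k_b+2nk_c=\mathrm{Tr}_g k$ puts the equation in the form $(1-v^2)v'+(2n+1)\tfrac{\rho'}{\rho}v+v(1-v^2)\tfrac{\rho''}{\rho'}+k_a v^2=\mathrm{Tr}_g k$. In the asymptotically flat end, \eqref{fnoanoih} gives $(2n+1)\rho'/\rho=(2n+1)/s+O(s^{-1-\tau})$, $\rho''/\rho'=O(s^{-1-\tau})$, $k_a=O(s^{-1-\tau})$, and crucially $\mathrm{Tr}_g k=O(s^{-1-2\tau})$, so the linearised operator is $v'+\tfrac{2n+1}{s}v$ with forcing of order $s^{-1-2\tau}$; the integrating factor $s^{2n+1}$ and a continuity/bootstrap argument then give first $v\to0$ and then $|v|\le Cs^{-2\tau_0}$ for any $\tau_0<\min\{\tau,n+\tfrac12\}$ (the homogeneous rate $s^{-(2n+1)}$ competing with the forcing-driven rate $s^{-2\tau}$), with the bound on $s|v'|$ following by reinserting this into the equation. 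In the asymptotically hyperbolic end, \eqref{asymptoticallyhyperbolic} gives $\rho'/\rho\to1$ and $\rho''/\rho'\to1$, so the linearised operator is $v'+(2n+2)v$ (the extra $+1$ coming precisely from the $\rho''/\rho'$ term) with forcing $O(e^{-qs})$, and the integrating factor $e^{(2n+2)s}$ yields $|v|+|v'|\le Ce^{-q_0 s}$ for any $q_0<\min\{q,2n+2\}$; throughout, the constants are tracked to depend only on $g$ up to second derivatives, $k$, and the chosen rate, and the $n=1$ case proceeds identically using Section \ref{section2.2}.
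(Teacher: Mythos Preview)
Your proposal is essentially correct and follows the same overall strategy as the paper: confinement of $v$ in $(-1,1)$ via the sign of $(1-v^2)v'$ at the endpoints, global extension from a local solution, and asymptotic decay via linearisation with an integrating factor. Your decay analysis is in fact organised slightly more transparently than the paper's, since you first rewrite the equation as $(1-v^2)v'+(2n+1)\tfrac{\rho'}{\rho}v+v(1-v^2)\tfrac{\rho''}{\rho'}+k_a v^2=\mathrm{Tr}_g k$ and then read off the rates directly; the paper arrives at the equivalent form \eqref{fpoihqoignoiqh} after rearranging \eqref{foqoifnoiwhg}.

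The substantive differences are in the degenerate boundary cases. For $\alpha=\pm 1$ you set $w=1\mp v$ and argue by a contraction mapping on the model $2ww'+\text{(linear)}\,w=\theta_{\mp}'(0)s+\cdots$; the paper instead uses the substitution $w=v-\tfrac{v^2}{2}$, which turns \eqref{foqoifnoiwhg} into the \emph{non-degenerate} ODE \eqref{ogihqoihnogiwnh} (the principal coefficient is $1+v\approx 2$), solves it with interior data $w_\varepsilon(0)=\tfrac12-\varepsilon$, and passes to the limit. The paper's route avoids having to justify a contraction in a singular weighted space and works uniformly even when $\theta_{\mp}'(0)=0$ (where your quadratic for $\beta$ degenerates). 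Similarly, for $H(0)=0$ you note that any bounded solution must satisfy $v(0)=0$ but do not actually construct one; the paper produces it as a subsequential limit of solutions $v_\varepsilon$ with $v_\varepsilon(\varepsilon)=0$ and represents the limit via the inverse of the singular linear operator $v'+\tfrac{\rho''}{\rho'}v$. Finally, the paper strengthens the confinement to a uniform bound $|v|<\sqrt{1-\delta}$ on $[0,\infty)$ (see \eqref{o0ihj9iqjq}), which is what lets one safely divide by $1-v^2$ throughout the decay step; your ``continuity/bootstrap'' remark would need to supply this, or an equivalent preliminary statement that $v\to 0$, before the linearisation is legitimate.
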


\begin{proof}
Assume first that the initial data are asymptotically flat.
Consider the case when $v(0)=\alpha$ with $|\alpha|<1$ and $H(0)\neq 0$. By standard methods there exists a $C^1$
solution on some maximal interval $[0,\bar{s})$ of nonzero length. We claim that the solution must satisfy the basic uniform pointwise bound 
\begin{equation}\label{aofnoainh}
|v(s)|<1 \quad\quad  \text{ for all } s.
\end{equation}
To see this, proceed by contradiction and assume that the estimate fails, then there exists a first $s_0>0$ such that $v(s_0)=\pm 1$. Without loss of generality we may take $v(s_0)=1$, then there is an increasing sequence $s_i \rightarrow s_0$ with $v'(s_i)\geq 0$ and $|v(s_i)|<1$. It follows that
\begin{equation}
0\leq \lim_{i\rightarrow \infty} (1-v^{2})v'(s_i)=\lim_{i\rightarrow \infty}\left[-\theta_{-}(s_i)
-(1-v^{2})F_{-}(s_i,v(s_i))\right]=-\theta_{-}(s_0)<0,
\end{equation}
which yields a contradiction. We conclude that \eqref{aofnoainh} is valid. In fact, this estimate may be improved. 
Observe that since $H(0)>0$, with the help of Proposition \ref{aofnoaigopinjopinhipoq} and the discussion preceding this theorem, we have that $\rho'(s)\geq c$ globally for some constant $c>0$. Thus, there exists a positive radial function $\bar{F}_{-}$ satisfying
\begin{equation}\label{93hrqhoihgiqh}
|F_{-}(v,s)|\leq \bar{F}_{-}(s)\leq \frac{c_1}{1+s}\quad \text{ for all }s\in[0,\infty) \text{ whenever }v(s)\geq 0,
\end{equation}
for some constant $c_1$. If $\partial M^{2(n+1)}$ is not a past apparent horizon, that is $\theta_-(0)>0$, then there exists a constant $\delta>0$ such that $\bar{F}_{-}(s)^{-1}\theta_{-}(s)> \delta$; the constant may also be chosen small enough to satisfy $v(0)=\alpha<\sqrt{1-\delta}$.
If $s_1$ is the first point at which $v(s_1)=\sqrt{1-\delta}$ then $v'(s_1)\geq 0$ and consequently
\begin{equation}\label{oqoqihoihnw}
0\leq -\theta_{-}(s_1)+(1-v^{2}(s_1))\bar{F}_{-}(s_1),
\end{equation}
which implies that $1-v^2(s_1)>\delta$ yielding a contradiction.
We conclude that $v(s)<\sqrt{1-\delta}$ at all points, and a similar argument with $F_-$ replaced by $F_+$ provides a lower bound when $\partial M^{2(n+1)}$ is not a future apparent horizon. If $\partial M^{2(n+1)}$ is a past apparent horizon, that is $\theta_-(0)=0$, then since $\theta_-\geq 0$ we have
\begin{equation}\label{ofhqnoifnoianiohgqah}
v'(s)=-\theta_-(s)(1-v^2(s))^{-1} - F_-(s,v(s))\leq \bar{F}_-(s)\leq c_1
\end{equation}
whenever $v$ is nonnegative.
It follows that $v(s)\leq c_1 s+\alpha$ for $s\in [0,\bar{s}_1)$ where $\bar{s}_1=\min\{\bar{s},c_{1}^{-1}(1-\alpha)\}$.
Since $\theta_-$ is strictly positive at $\bar{s}_1 /2$, by applying the same arguments in the non-past apparent horizon case to the region $s\geq \bar{s}_1 /2$, we find that globally $v(s)<\sqrt{1-\delta}$ for some $\delta>0$ depending on $\alpha$. As before, analogous arguments with $F_-$ replaced by $F_+$ yield a lower bound when $\partial M^{2(n+1)}$ is a future apparent horizon. Therefore, in all cases 
\begin{equation}\label{o0ihj9iqjq}
|v(s)|<\sqrt{1-\delta}
\end{equation}
on the domain of existence. These pointwise estimates imply uniform control on first derivatives immediately from equation \eqref{foqoifnoiwhg}. The function $v$ is then uniformly continuous, and thus has a unique and continuous extension to $[0,\bar{s}]$. By differentiating the equation, the same holds true for all derivatives of $v$. Hence the solution has an unrestricted domain, that is $v\in C^{\infty}([0,\infty))$.

Consider now the case when $\alpha=1$, $\partial M^{2(n+1)}$ is a past apparent horizon, and $H(0)\neq 0$. 
Observe that for $-\tfrac{3}{2}\leq w\leq \tfrac{1}{2}$ or equivalently $|v|\leq 1$ equation \eqref{foqoifnoiwhg} may be expressed as
\begin{equation}\label{ogihqoihnogiwnh}
(1+v(w))w'=-\theta_- +F_-(s,v(w))\left(1-2w -2\sqrt{1-2w}\right),
\end{equation}
where $w=v-\frac{v^2}{2}$ and $v(w)=1-\sqrt{1-2w}$.
Notice that \eqref{ogihqoihnogiwnh} is no longer degenerate in that the principal symbol does not vanish. To produce a solution 
in the required range, we may take a limit of solutions $w_{\varepsilon}$ with initial condition $w_{\varepsilon}(0)=\tfrac{1}{2}-\varepsilon$. Due to the nondegeneracy of this equation and the estimate \eqref{o0ihj9iqjq}, the solution $w:=\lim_{\varepsilon\rightarrow 0}w_{\varepsilon}$ is $C^1$ up to the boundary, and satisfies $-\tfrac{3}{2} \leq w\leq \tfrac{1}{2}$. In fact, since $\theta_-(s)>0$ for $s>0$ we must have that $w(s)<\tfrac{1}{2}$ for $s>0$; similarly, replacing the $F_-$ with $F_+$ as before shows that also $w(s)>-\tfrac{3}{2}$. It may further be shown that the corresponding $v$ has continuous first derivatives up to the boundary. To see this rewrite equation \eqref{foqoifnoiwhg} once more as
\begin{equation}\label{fonoinqoinqh234}
\left(u+\mathcal{F}_-\right)u'=2v\theta_-,
\end{equation}
where $u=1-v^2-\mathcal{F}_-$ and $\mathcal{F}_-=\int_{0}^{s}2vF_-$. Integrating \eqref{fonoinqoinqh234} and then dividing by $s^2$ yields a quadratic equation for $u/s$, with coefficients that have finite limits as $s\rightarrow 0$. This implies existence of the first derivative for $v$ at $s=0$, which may be found as the nonpositive solution of
\begin{equation}
2v'(0)^2+2F_-(0)v'(0)-\theta'_-(0)=0.
\end{equation}
It follows that $v\in C^{1}([0,\infty))\cap C^{\infty}((0,\infty))$ satisfies \eqref{foqoifnoiwhg} with $v(0)=1$. An analogous argument shows that one may solve the equation with $v(0)=-1$ when $\partial M^{2(n+1)}$ is a future apparent horizon. 

Lastly, consider the case when $H(0)=0$. The primary difference in this situation concerns the functions $F_{\mp}$, namely they must blow-up at the boundary due to the term $v\tfrac{\rho''}{\rho'}$, unless $v(0)=0$. Thus, in this case we will only consider the initial condition $v(0)=0$. Note that with this condition, the principal part of the equation is nondegenerate near $s=0$, however there is a singular behavior in the coefficients:
\begin{equation}\label{ifihwoihihihwihgiw}
v'+\frac{\rho''}{\rho'}v=k_a +\frac{(2n+1)\rho'}{(1+v)\rho}-\frac{\theta_-}{1-v^2}.
\end{equation}
Approximate solutions $v_{\varepsilon}\in C^{\infty}([\varepsilon,1))$, with $v_{\varepsilon}(\varepsilon)=0$ and $|v_{\varepsilon}|<1$, may be obtained with uniform estimates on compact subsets of $(0,1)$ as $\varepsilon\rightarrow 0$. Thus, a diagonal argument ensures subconvergence to a solution $v\in C^{\infty}((0,1))$. Using the inverse of the linear operator on the left-hand side of \eqref{ifihwoihihihwihgiw}, we find that the limit function may be represented by $v(s) =\rho'(s)^{-1}\int_{0}^s \rho'u$ where $u$ is uniformly bounded. It follows that the solution is continuous up to the boundary with $v(0)=0$, and this may then be extended globally using the methods above, so that $v\in C^{0}([0,\infty))\cap C^{\infty}((0,\infty))$.

Finally, it will be shown that $v$ has the correct asymptotics at
infinity. Recall that we have already assumed the asymptotically flat hypothesis. By the fall-off conditions \eqref{fnoanoih} and the estimate \eqref{o0ihj9iqjq}, we may write equation \eqref{foqoifnoiwhg} as
\begin{equation}\label{fpoihqoignoiqh}
v'+\frac{(2n+1)s^{-1}}{1-v^{2}}v=O(s^{-2\tau-1}+s^{-\tau-1}|v|),\quad\quad s\geq s_0 >0.
\end{equation}
The solution on this exterior region can then be represented by
\begin{equation}
v(s)\!=\!\exp\left(\!\!-\!\!\int_{s_{0}}^{s}\!\!\frac{(2n+1)r^{-1}}{1-v^{2}}dr\!\right)
\!\!\left[\int_{s_{0}}^{s}\!\!\!O(r^{-2\tau-1}\!\!+\! r^{-\tau-1}|v|)\exp\left(\int_{s_{0}}^{r}\!\frac{(2n+1)t^{-1}}{1-v^{2}}dt\right)dr
\!+\!v(s_{0})\right].
\end{equation}
From this it follows that $v$ decays to zero polynomially. To estimate the rate, choose any positive $\tau_0< \min\{\tau,n+\tfrac{1}{2}\}$ and denote the exponential function inside the integral as $\mathcal{E}(r)$.
Note that $r^{-2\tau_0}\mathcal{E}(r)$ is increasing for $r\geq s_0$ if $s_0$ is large enough. Then extracting this function from the integral at its maximum value shows that the term involving $O(r^{-2\tau-1})$ decays on the order of $s^{-2\tau_0}$; clearly also the last term involving $v(s_0)$ decays at this rate as well. For the term involving $|v|$, extracting $r^{-\tau_0}\mathcal{E}(r)$ in the same way yields decay for $v$ on the order of $s^{-\tau_0}$. Inserting this estimate back into the same term and extracting $r^{-2\tau_0}\mathcal{E}(r)$ produces the desired decay, namely, with the help of \eqref{fpoihqoignoiqh} for the derivative fall-off we find that
\begin{equation}
|v(s)|+s|v'(s)|\leq Cs^{-2\tau_0}
\end{equation}
for some constant $C$.

Assume now that the initial data are asymptotically hyperbolic. Only a slight modification of the above arguments is needed to obtain global existence of a solution satisfying the required initial conditions. More precisely, in this case $F_- =-(2n+1)+O(e^{-2s})$ and $\theta_-=2n+1+O(e^{-2s})$ in the asymptotic end, and therefore the function $\bar{F}_-(s)$ used in \eqref{93hrqhoihgiqh}-\eqref{ofhqnoifnoianiohgqah}
should be taken instead to be a large constant. Consider now the asymptotics of the solution, and with the help of \eqref{asymptoticallyhyperbolic} rewrite equation \eqref{foqoifnoiwhg} as
\begin{equation}
v'+\left(\frac{(2n+1)\rho'}{(1-v^2)\rho}+\frac{\rho''}{\rho'}\right) v=O(e^{-qs}+e^{-qs}|v|), \quad\quad s\geq s_0 >0.
\end{equation}
The solution on this exterior region can then be represented by
\begin{equation}
v(s)=e^{-\int_{s_0}^s \mathbf{A}(r)dr}\left(\int_{s_0}^s O(e^{-qr}+e^{-qr}|v|)e^{\int_{s_0}^{r}\mathbf{A}(t)dt}dr +v(s_0)\right),\quad\text{ } \mathbf{A}(s)=\frac{(2n+1)\rho'}{(1-v^2)\rho}+\frac{\rho''}{\rho'}.
\end{equation}
From this it follows that $v$ decays to zero exponentially. Since $\mathbf{A}(s)=2n+2 +o(1)$, an argument analogous to that in the asymptotically flat case may be used to obtain
\begin{equation}
|v(s)|+|v'(s)|\leq Ce^{-q_0 s},
\end{equation}
for any $q_0<\min\{q,2n+2\}$.
\end{proof}

\section{Proof of Theorem \ref{foiqnoinoiqnhh}: Asymptotically Flat Penrose Inequality}
\label{sec4} \setcounter{equation}{0}
\setcounter{section}{4}

Here we will establish the spacetime Penrose inequality for asymptotically flat $\mathrm{SU}(n+1)$-invariant initial data sets. This will be accomplished with an averaged monotonicity for the Hawking mass along inverse mean curvature flow within the Jang deformed manifold. A crucial observation in this setting, which replaces the use of Gauss-Bonnet, is that the scalar curvature of the leaves cannot be `too large'.
Let $\Sigma\subset (M^d,g)$ be a smooth hypersurface in a $d$-dimensional Riemannian manifold, then the Hawking mass of $\Sigma$ is given by
\begin{equation}
\mathbf{m}_H(\Sigma) := \frac{1}{2} \left(\frac{|\Sigma|}{\omega_{d-1}}\right)^{\frac{d-2}{d-1}}\left[ 1 - \frac{1}{(d-1)^2 \omega_{d-1}^{\frac{2}{d-1}} |\Sigma|^{\frac{d-3}{d-1}} }\int_\Sigma H^2 dV\right],
\end{equation}
where $H$ is the mean curvature of $\Sigma$ and $|\Sigma|$ denotes its area. If $\{\Sigma_t\}_{t=0}^{\infty}$ denotes a smooth inverse mean curvature flow, then a direct computation (see e.g. \cite[Theorem 4.27]{Leetext} for the case $d=3$) shows that
\begin{align} \label{dmdt}
\begin{split}
\frac{d}{dt} \mathbf{m}_H (\Sigma_t) =& \frac{1}{2} \left( \frac{|\Sigma_t|}{\omega_{d-1}}\right)^{\frac{d-2}{d-1}} \left( \frac{d-2}{d-1} - \frac{1}{(d-1)^2 \omega_{d-1}^{\frac{2}{d-1}} |\Sigma_t|^{\frac{d-3}{d-1}}} \int_{\Sigma_t} R_{\Sigma} dV\right) \\
+&  \frac{1}{2} \left( \frac{|\Sigma_t|}{\omega_{d-1}}\right)^{\frac{d-2}{d-1}} \! \frac{1}{(d-1)^2 \omega_{d-1}^{\frac{2}{d-1}} |\Sigma_t|^{\frac{d-3}{d-1}}} \int_{\Sigma_t} \left( \frac{|\nabla_\Sigma H|^2}{H^2} + |II|^2 - \frac{H^2}{d-1} + R\right)dV,
\end{split}
\end{align}
where $R_{\Sigma}$ is the scalar curvature and $II$ is the second fundamental form of $\Sigma_t$.

\begin{prop}\label{prop:monotonicity}
Let $(M^{2(n+1)},g)$ be an asymptotically flat or asymptotically hyperbolic $\mathrm{SU}(n+1)$-invariant Riemannian manifold with outermost minimal surface boundary, for $n\geq 1$. If $\Sigma$ is a level set of the distance function to the boundary then
\begin{equation}
\int_{\Sigma}R_{\Sigma}dV\leq c_n |\Sigma|^{\frac{2n-1}{2n+1}},
\end{equation}
where $c_n = 2n(2n+1)\omega_{2n+1}^{\frac{2}{2n+1}}$. Moreover, equality is achieved if and only if $\Sigma$ is a round sphere.
\end{prop}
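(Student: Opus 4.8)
The plan is to reduce the inequality to an elementary one-variable estimate using the explicit formula \eqref{rsigma} for $R_\Sigma$ together with the volume form \eqref{volumeform}. Recall from Proposition \ref{aofnoaigopinjopinhipoq} that on a level set $\Sigma=\Sigma_s$ the metric is determined by $\rho=\rho(s)>0$ and $B=B(s)$, with
\begin{equation}
R_\Sigma=-\frac{2n}{\rho^2}\left(e^{-4(n+1)B}-2(n+1)e^{-2B}\right),\qquad dV_\Sigma=\rho^{2n+1}\,dV_{S^{2n+1}},
\end{equation}
so that $|\Sigma|=\omega_{2n+1}\rho^{2n+1}$ and
\begin{equation}
\int_\Sigma R_\Sigma\,dV=-2n\,\omega_{2n+1}\,\rho^{2n-1}\left(e^{-4(n+1)B}-2(n+1)e^{-2B}\right).
\end{equation}
Since $|\Sigma|^{\frac{2n-1}{2n+1}}=\omega_{2n+1}^{\frac{2n-1}{2n+1}}\rho^{2n-1}$ and $c_n=2n(2n+1)\omega_{2n+1}^{\frac{2}{2n+1}}$, the claimed inequality is equivalent, after dividing by $2n\,\omega_{2n+1}\,\rho^{2n-1}>0$, to the pointwise statement
\begin{equation}
-e^{-4(n+1)B}+2(n+1)e^{-2B}\leq 2n+1\qquad\text{for all }B\in\mathbb{R}.
\end{equation}

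So the core of the proof is to verify this scalar inequality and characterize equality. First I would substitute $t:=e^{-2B}\in(0,\infty)$, reducing it to showing $\varphi(t):=-t^{2(n+1)}+2(n+1)t\leq 2n+1$ for all $t>0$. Then I would do a routine single-variable maximization: $\varphi'(t)=-2(n+1)t^{2n+1}+2(n+1)=2(n+1)(1-t^{2n+1})$, which vanishes only at $t=1$, is positive for $t<1$ and negative for $t>1$, so $t=1$ is the unique global maximum, with $\varphi(1)=-1+2(n+1)=2n+1$. This gives the inequality, with equality if and only if $t=1$, i.e. $B=0$.

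Finally I would translate the equality case back. Equality in the integrated inequality forces $e^{-2B(s)}=1$, hence $B\equiv 0$ on $\Sigma$, which by \eqref{aonfoih} means $P=Q=\rho$ on that level set, so the induced metric is $\rho^2$ times the round metric on $S^{2n+1}$; that is, $\Sigma$ is a round sphere. Conversely, if $\Sigma$ is a round sphere then the homogeneous metric must be $\mathrm{SO}(2n+2)$-invariant, forcing $B=0$ and hence equality. I do not expect any real obstacle here: the only subtlety is the bookkeeping of the constants $\omega_{2n+1}$ and the exponents $\tfrac{2n-1}{2n+1}$, and the recognition that the whole inequality is pointwise (indeed purely algebraic in $B$) rather than genuinely integral — once that is noticed, the proof is just the calculus of the single function $\varphi$.
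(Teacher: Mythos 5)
Your argument for the case $n>1$ is correct and is in fact identical to the paper's: you reduce to the scalar inequality $-e^{-4(n+1)B}+2(n+1)e^{-2B}\leq 2n+1$, substitute $t=e^{-2B}$, and perform the single-variable maximization of $\varphi(t)=-t^{2(n+1)}+2(n+1)t$. The paper writes the same thing as $I_n(\varrho)=2n+1+\varrho^{2(n+1)}-2(n+1)\varrho\geq 0$ with $\varrho=e^{-2B}$, so $I_n = 2n+1-\varphi$; same function, same calculus, same equality characterization.

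However, there is a genuine gap: the proposition is stated for $n\geq 1$, and your proof only covers $n>1$. You invoke Proposition~\ref{aofnoaigopinjopinhipoq} and the scalar curvature formula \eqref{rsigma}, but both of those are stated and proved under the hypothesis $n>1$. The reason this matters is not cosmetic. For $n=1$, the level set is $S^3\cong\mathrm{SU}(2)$, and $\mathrm{SU}(2)$-invariant (i.e.\ left-invariant) metrics on $S^3$ form a three-parameter family $\tfrac{1}{4}\bigl(c_1^2(\hat\sigma^1)^2+c_2^2(\hat\sigma^2)^2+c_3^2(\hat\sigma^3)^2\bigr)$, not the one-parameter Berger family parameterized by a single squashing function $B$. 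Berger metrics arise only under the stronger symmetry assumption $c_1=c_2$. Consequently, the inequality does not reduce to a one-variable calculus problem in $B$; using the scalar curvature formula \eqref{suscalarcurv} and scaling $c_3=1$, one must instead analyze the two-variable function
\begin{equation}
I_1(c_1,c_2)= 3+\frac{(1-c_1^2 -c_2^2)^2}{(c_1 c_2)^{4/3}}-4(c_1 c_2)^{2/3},
\end{equation}
show that $(1,1)$ is its unique critical point in the first quadrant (a global minimum with value $0$), and check the behavior of $I_1$ along the boundary and at infinity. This is precisely what Section~\ref{section2.2} and the second half of the paper's proof of Proposition~\ref{prop:monotonicity} supply, and your proposal is missing it.
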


\begin{proof}
Consider first the case $n>1$. According to Proposition \ref{aofnoaigopinjopinhipoq}, \eqref{volumeform}, and \eqref{rsigma} we have
\begin{equation}\label{monotonicity}
 2n+1 - \frac{1}{2n\omega_{2n+1}^{\frac{2}{2n+1}} |\Sigma|^{\frac{2n-1}{2n+1}}} \int_\Sigma R_\Sigma dV=
2n + 1 + e^{-4(n+1)B(s)} -2(n+1)e^{-2B(s)}.
\end{equation}
Setting $\varrho= e^{-2B}$ yields the polynomial
\begin{equation}\label{aofnoiqanoinpoih}
I_n(\varrho) = 2n+1 + \varrho^{2(n+1)} - 2(n+1) \varrho.
\end{equation}
This function satisfies $I_n(0)=2n+1$, it monotonically decreases on $(0,1)$ to a minimum $I_n(1) = 0$, and then increases monotonically for $\varrho\geq 1$. It follows that \eqref{monotonicity} is always nonnegative, and vanishes only when $B=0$ which coincides with a round $\Sigma$.

Consider now the case $n=1$. According to Section \ref{section2.2} we have
\begin{equation}\label{monotonicity1}
 3 - \frac{1}{2\omega_{3}^{2/3} |\Sigma|^{1/3}} \int_\Sigma R_\Sigma dV=
3-\frac{1}{(c_1 c_2 c_3)^{4/3}}\left[2c_1^2(c_2^2 +c_3^2)-c_1^4 -(c_2^2 -c_3^2)^2 \right].
\end{equation}
Since this quantity is invariant under rescalings of the metric, we may set $c_3 =1$ and rearrange terms to find that it becomes
\begin{equation}
I_1(c_1,c_2)= 3+\frac{(1-c_1^2 -c_2^2)^2}{(c_1 c_2)^{4/3}}-4(c_1 c_2)^{2/3}.
\end{equation}
In the first quadrant $c_1, c_2 >0$ this function has a local minimum $I_1(1,1) =0$, which corresponds to the round metric, and there are no other critical points on this domain. Limits to the $c_1$ and $c_2$-axes are infinitely positive, except at the two points $(c_1,c_2)=(1,0)$ or $(0,1)$ where the limit is 3. Moreover, limits to infinity are also infinitely positive, except in one direction when $c_1 =c_2$ where the limit is 3. It follows again that \eqref{monotonicity} is nonnegative, and vanishes only for a round $\Sigma$.
\end{proof}

This proposition shows that the first line of \eqref{dmdt} is nonnegative along a $\mathrm{SU}(n+1)$-invariant inverse mean curvature flow. When combined with weak nonnegativity of the scalar curvature for the Jang deformation, it will lead to an averaged monotonicity for Hawking mass in the Jang setting. The next result gives the expected upper bound for the limiting Hawking mass in terms of the ADM energy $E$. In dimension three this has been established \cite[Proposition 4.52]{Leetext}, \cite[Lemma 7.4]{Huisken:2001} using the Gauss-Bonnet Theorem. Here we avoid the need for Gauss-Bonnet with help from the proof of Proposition \ref{prop:monotonicity}.

\begin{prop}\label{convergence1}
Let $(M^{2(n+1)},g)$ be an asymptotically flat $\mathrm{SU}(n+1)$-invariant Riemannian manifold with outermost minimal surface boundary, for $n\geq 1$. If $\Sigma_s$ denotes the surface of distance $s$ from the boundary, then
\begin{equation}
\lim_{s\rightarrow\infty}\mathbf{m}_{H}(\Sigma_s)\leq E.
\end{equation}
\end{prop}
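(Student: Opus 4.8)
The plan is to compute the Hawking mass on the coordinate spheres $\Sigma_s$ directly from the asymptotically flat expansion of the metric in the form \eqref{GGmetric}, and compare the result with the ADM energy integral. First I would use the fall-off \eqref{fnoanoih}, which gives $\rho(s)=s+O_2(s^{1-\tau})$ and $B=O_2(s^{-\tau})$, to expand the area $|\Sigma_s|$, the mean curvature $H_\Sigma=(2n+1)\rho'/\rho$, and the integrand $H^2$ of the Hawking mass. Since $|\Sigma_s|=\omega_{2n+1}\rho^{2n+1}$ and $H_\Sigma$ is constant on each $\Sigma_s$, the Hawking mass reduces to the one-variable expression
\begin{equation}
\mathbf{m}_H(\Sigma_s)=\frac12\left(\frac{|\Sigma_s|}{\omega_{2n+1}}\right)^{\frac{2n-1}{2n+1}}\!\left[1-\frac{|\Sigma_s|^{2/(2n+1)}}{(2n+1)^2\omega_{2n+1}^{2/(2n+1)}}\left(\frac{(2n+1)\rho'}{\rho}\right)^2\right]=\frac12\rho^{2n-1}\left(1-(\rho')^2\right),
\end{equation}
so the limit I must evaluate is $\lim_{s\to\infty}\tfrac12\rho^{2n-1}(1-(\rho')^2)$.

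Next I would relate $\rho$ and $\rho'$ to the metric coefficients $P$ and $Q$ via $\rho^{2n+1}=PQ^{2n}$, and then relate $P,Q$ to the Euclidean polar form of $\delta$ written with respect to the Hopf fibration, exactly as in the last paragraph of the proof of Proposition \ref{aofnoaigopinjopinhipoq}: writing $r$ for the Euclidean radial coordinate one has $P=r+O_2(r^{1-\tau})$, $Q=r+O_2(r^{1-\tau})$, and $s=r+O(r^{1-\tau})$ up to the accuracy needed. The point is that $1-(\rho')^2$ is, to leading order, governed by the same combination of metric derivatives that appears in the ADM energy flux. Concretely, I would expand $\varphi_*g-\delta=O_2(|x|^{-\tau})$ in these coordinates, plug into the ADM integral
\begin{equation}
E=\frac{1}{2(2n+1)\omega_{2n+1}}\lim_{r\to\infty}\int_{S_r}(g_{ij,i}-g_{ii,j})\nu^j\,dV,
\end{equation}
and carry out the integration over $S^{2n+1}$ using the homogeneity of the metric, obtaining $E$ as a limit of a radial expression built from $P,Q$ and their first derivatives. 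Comparing this with the radial expression for $\tfrac12\rho^{2n-1}(1-(\rho')^2)$ should show that the two limits coincide, or more precisely that the Hawking mass limit is bounded above by $E$ — the inequality (rather than equality) arising because, much as in the three-dimensional case, the Hawking mass picks up only the monopole/radial part of the metric asymptotics while $E$ is the full flux; any non-radial contributions to the energy are nonnegative by the computation in the proof of Proposition \ref{prop:monotonicity} (the polynomial $I_n(\varrho)\ge 0$, equivalently $1+e^{-4(n+1)B}-2(n+1)e^{-2B}\ge 0$), which controls precisely the anisotropy measured by $B$.

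The main obstacle I anticipate is bookkeeping the decay exponents carefully enough to justify taking the limit: with $\tau>\tfrac{2n}{2}=n$ one has $\rho^{2n-1}\cdot O_2(s^{-\tau})$ terms, and one must check that all error terms generated by differentiating the $O_2(s^{-\tau})$ corrections to $P$ and $Q$ twice, multiplying by the growing factor $\rho^{2n-1}\sim s^{2n-1}$, and integrating over a sphere of volume $\sim s^{2n}$ against the $1/(\text{area})$ normalization, actually tend to zero — this needs $\tau>n-\tfrac12$ at a minimum, which is implied by $\tau>n$. A secondary subtlety is that $\Sigma_s$ is a distance sphere, not a coordinate sphere, so I would first argue (as already done in Section \ref{sec2}) that for large $s$ the distance spheres lie in the asymptotic region and differ from coordinate spheres by a controlled reparametrization of the radial variable, which does not affect the limit. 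Once these decay estimates are in place, the identification of $\lim_s \mathbf{m}_H(\Sigma_s)$ with (a quantity no larger than) $E$ is a direct computation, and the nonnegativity input from Proposition \ref{prop:monotonicity} supplies the inequality.
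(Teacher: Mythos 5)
Your approach is genuinely different from the paper's. The paper's proof of Proposition~\ref{convergence1} uses the twice-traced Gauss equation together with the Cauchy--Schwarz inequality $(d-1)|II|^2\geq H^2$ to upper-bound the Hawking mass by an Einstein-tensor flux (whose limit is the ADM energy by \cite[Theorem~3.14]{Leetext}) plus a term involving $\int_{\Sigma_s}R_\Sigma$, and then kills the latter via the estimate $I_n(s)=O(s^{-2\tau})$. You instead propose the direct route: compute the Hawking mass explicitly on the orbit spheres (it is $\tfrac12\rho^{2n}\bigl(1-(\rho')^2\bigr)$, not $\tfrac12\rho^{2n-1}\bigl(1-(\rho')^2\bigr)$ -- the exponent $(d-2)/(d-1)=2n/(2n+1)$ gives $\rho^{2n}$, compare \eqref{giobqoinoiqhh}), compute the ADM flux in the cohomogeneity one chart, and compare. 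This is essentially what the paper does for the asymptotically hyperbolic version in Proposition~\ref{convergence2}, so your instinct is sound, and if carried through it would in fact give the sharper conclusion $\lim_s\mathbf{m}_H(\Sigma_s)=E$.

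The gap is the step where you claim the anisotropic, $B$-dependent contribution to the ADM flux is nonnegative ``by the computation in the proof of Proposition~\ref{prop:monotonicity}''. That is a misattribution: $I_n\geq 0$ is a statement about the intrinsic scalar curvature of the orbit spheres, and it controls the monotonicity of the Hawking mass under IMCF; it does not give a sign for the ADM boundary integral, which is a different object. What actually makes the direct approach work is that the $B$-contributions to the flux \emph{vanish} in the limit rather than merely carry a sign: on expanding, the linear-in-$B$ terms cancel identically ($e^{-4nB}+2ne^{2B}=(2n+1)+O(B^2)$, the same algebra responsible for $I_n$ vanishing to second order at $B=0$), so the remainder is $O(B^2+|B'|^2)=O(s^{-2\tau})$, and after multiplying by the $\sim s^{d-2}=s^{2n}$ area factor and using $\tau>n=\tfrac{d-2}{2}$ this tends to zero. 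You need to exhibit this cancellation at the level of the ADM integrand itself rather than import it as a sign condition; with that flux computation in place, your argument closes, but as written the key step is unjustified.
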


\begin{proof}
We will closely follow the arguments of \cite[Proposition 4.52]{Leetext}. Taking two traces of the Gauss equations along $\Sigma_s$ yields
\begin{equation}
H^2 = 2 \text{Ric}(\nu,\nu) - R + R_\Sigma + |II|^2,
\end{equation}
where $\nu$ is the unit normal pointing towards infinity. Let $d=2(n+1)$ and note that by Cauchy-Schwarz $(d-1)|II|^2 \geq H^2$, therefore
\begin{equation}
-(d-2) H^2 \leq -(d-1)R_\Sigma - 2(d-1)\text{Ric}(\nu,\nu) + (d-1)R.
\end{equation}
We then have
\begin{align}
\begin{split}
\mathbf{m}_H(\Sigma_s) \leq & \frac{(d-1)}{2(d-2)} \left(\frac{|\Sigma_s|}{\omega_{d-1}}\right)^{\frac{d-2}{d-1}}
\left[\frac{d-2}{d-1} - \frac{1}{(d-1)^2 \omega_{d-1}^{\frac{2}{d-1}} |\Sigma_s|^{\frac{d-3}{d-1}} }\int_{\Sigma_s} R_\Sigma dV\right] \\
&-\frac{1}{(d-2)(d-1)\omega_{d-1}} \left(\frac{|\Sigma_s|}{\omega_{d-1}}\right)^{\frac{1}{d-1}} \int_{\Sigma_s} G(\nu,\nu) dV,
\end{split}
\end{align}
where $G=\mathrm{Ric}-\tfrac{1}{2}Rg$ is the Einstein tensor. According to the discussion in Section \ref{sec2} it holds that
\begin{equation}
\left(\frac{|\Sigma_s|}{\omega_{d-1}}\right)^{\frac{1}{d-1}} =s+O(s^{1-\tau}),
\end{equation}
and hence \cite[Theorem 3.14]{Leetext} implies the last term converges to the ADM energy to produce
\begin{equation}
\lim_{s \to \infty} \mathbf{m}_H(\Sigma_s) \leq \lim_{s \to \infty} \frac{1}{2(d-1)} \left(\frac{|\Sigma_s|}{\omega_{d-1}}\right)^{\frac{d-2}{d-1}}I_n(s) + E,
\end{equation}
where $I_n(s)$ is the function from the proof of Proposition \ref{prop:monotonicity}. Since $B=O(s^{-\tau})$ for $n>1$ and $c_i=1+O(s^{-\tau})$ for $n=1$, a calculation shows that $I_n(s)=O(s^{-2\tau})$. Hence
\begin{equation}
\lim_{s \to \infty} \mathbf{m}_H(\Sigma_s) \leq E+ \lim_{s \to \infty} O(s^{d-2-2\tau})=E,
\end{equation}
as $\tau>\frac{d-2}{2}$.
\end{proof}

We are now in a position to complete the proof of Theorem \ref{foiqnoinoiqnhh}. Let  $(M^{2(n+1)},\bar{g})$ be the Jang manifold associated with the given initial data set, which is constructed from the solution given in Theorem \ref{jangexist} with $v(0)=\pm 1$ if $\partial M^{2(n+1)}$ is a past (future) apparent horizon having $H(0)\neq 0$, or with $v(0)=0$ if $H(0)=0$; in this last case the boundary is both a past and future apparent horizon. Note that this manifold is asymptotically flat, $\mathrm{SU}(n+1)$-invariant, and possesses an outermost minimal surface boundary. This last assertion follows from the fact that the $\bar{g}$-mean curvature of surfaces having constant distance to the boundary is $\bar{H}=\sqrt{1-v^2}H$, so that $\bar{H}(0)=0$ and $\bar{H}(\bar{s})>0$ for $\bar{s}>0$. Moreover, these surfaces provide a smooth inverse mean curvature $\{\Sigma_{\bar{t}}\}_{\bar{t}=0}^{\infty}$ in the Jang manifold. Next, recall that the scalar curvature of the Jang metric \cites{Bray:2009oni,Bray:2009au} takes the form
\begin{equation}\label{barscalar}
\bar{R}=16\pi(\mu-J(w))+|h-k|_{\bar{g}}^2 +2|X|_{\bar{g}}^2 -2\phi^{-1}\mathrm{div}_{\bar{g}}(\phi X),
\end{equation}
where
\begin{equation}
w_i=\frac{\phi f_i}{\sqrt{1+\phi^2 |\nabla f|_g^2}},\quad\quad h_{ij}=\frac{\phi \nabla_{ij} f + \phi_i f_j +\phi_j f_i}{\sqrt{1 + \phi^2 |\nabla f|_g^2}},\quad\quad
X_i=\frac{\phi f^j}{\sqrt{1+\phi^2 |\nabla f|_g^2}}(h_{ij}-k_{ij}).
\end{equation}
Here $h$ is the second fundamental form of the Jang graph in a dual Lorentzian setting, and $|w|_g\leq 1$. Although the computation appearing in \cites{Bray:2009oni,Bray:2009au} was set in dimension three, it extends without modification to all higher dimensions.
We may now integrate \eqref{dmdt} from $\bar{t}=0$ to $\infty$, apply the coarea formula as well as Propositions \ref{prop:monotonicity} and \ref{convergence1}, to find
\begin{align}\label{898989}
\begin{split}
\bar{E}- \frac{1}{2} \left(\frac{\mathcal{A}}{\omega_{2n+1}}\right)^{\frac{2n}{2n+1}}\geq &
\frac{1}{2(2n+1)^2 \omega_{2n+1}}\int_{M^{2(n+1)}} \left( \frac{|\Sigma_{\bar{t}}|}{\omega_{2n+1}}\right)^{\frac{1}{2n+1}}\bar{H}\bar{R}dV_{\bar{g}}\\
\geq& \frac{1}{(2n+1) \omega_{2n+1}}\left(\int_{\partial M^{2(n+1)}}\phi X(\bar{\nu})dV-\int_{\Sigma_{\infty}}\phi X(\bar{\nu})dV\right)
\end{split}
\end{align}
where $\bar{E}$ is the ADM energy with respect to the Jang metric and $\mathcal{A}=|\partial M^{2n+1}|$.
In the second inequality the dominant energy condition used, as well as the expression \eqref{fouqoboiqh} for $\phi$ together with the divergence theorem for the last term in \eqref{barscalar}. Here $\bar{\nu}=\partial_{\bar{s}}=\sqrt{1-v^2}\partial_s$, and $\Sigma_{\infty}$ indicates a limit to the asymptotic end along $s$-level sets.

Consider now the boundary terms of \eqref{898989}. According to \cite[pg. 582]{Bray:2009oni}, the Jang equation implies that
\begin{equation}
(h -k)(\nu,\nu)=(1+\phi^2 |\nabla f|_g^2)\left(-\frac{\phi \nu(f)}{\sqrt{1+\phi^2 |\nabla f|_g^2}}H+\mathrm{Tr}_{\Sigma}k\right)=(1-v^2)^{-1}(-v H+\mathrm{Tr}_{\Sigma}k)
\end{equation}
for any level set $\Sigma$ of $f$ with unit normal $\nu$. Therefore, on each $\Sigma_s$ it holds that
\begin{equation}\label{0987}
\phi X(\bar{\nu})= \phi \sqrt{1-v^2} \left(\frac{\phi f'}{\sqrt{1+\phi^2 f'^2}}\right)(h_{ss}-k_{ss})
=v\rho'\left((\pm 1-v)H \mp \theta_{\mp}\right).
\end{equation}
Since $|v(s)|=O(s^{-2\tau_0})$ for $2\tau_0>d-2=2n$ and $H,\theta_{\pm}=O(s^{-1})$, the boundary integral at infinity converges to zero. Moreover, 
if the boundary is a past (future) apparent horizon with $H(0)\neq 0$ then 
$v(0)=\pm 1$, and if the boundary is an apparent horizon with $H(0)=0$ then $v(0)=0$. It follows that the inner boundary integral vanishes in all cases. The desired Penrose inequality \eqref{foiaoinoqinhoipqjh} involving ADM mass now follows from the agreement between the ADM energies $\bar{E}=E$ in light of the decay of $v$, and the vanishing ADM linear momentum as shown in Appendix \ref{appvanishk}. 

It remains to establish the rigidity statement. If equality holds in \eqref{foiaoinoqinhoipqjh}, then equality must hold in Proposition \ref{prop:monotonicity} for each surface $\Sigma_s$. This shows that each such surface is round, implying that the original initial data set $(M^{2(n+1)},g,k)$ is spherically symmetric. In this case it is known that the Jang graph yields the desired embedding into the Schwarzschild spacetime \cite[Theorem 3.4]{BKS}.

\section{Proof of Theorem \ref{foiqnoinoiqnhh1}: Asymptotically Hyperbolic Penrose Inequality}
\label{sec5} \setcounter{equation}{0}
\setcounter{section}{5}

In this section we will show how the arguments presented for the asymptotically flat case can be modified to establish the spacetime Penrose inequality in the asymptotically hyperbolic setting. A 3-dimensional `hyperbolic Hawking mass' was defined by Bray-Chu\'{s}ciel in \cite[Section 4.1]{Bray:2003ns}, and its generalization to $d$-dimensions may be expressed as
\begin{equation}\label{Hawkingmass:hyp}
\mathbf{m}^{\mathrm{hyp}}_H(\Sigma) := \frac{1}{2} \left(\frac{|\Sigma|}{\omega_{d-1}}\right)^{\frac{d-2}{d-1}}\left[ 1 + \left(\frac{|\Sigma|}{\omega_{d-1}}\right)^{\frac{2}{d-1}} - \frac{1}{(d-1)^2 \omega_{d-1}^{\frac{2}{d-1}} |\Sigma|^{\frac{d-3}{d-1}} }\int_\Sigma H^2 dV \right].
\end{equation}  
Under a smooth inverse mean curvature flow its first variation is given by
\begin{equation} \label{dmdthyp}
\begin{aligned}
\frac{d}{dt} \mathbf{m}^{\mathrm{hyp}}_H &(\Sigma_t)  = \frac{1}{2} \left( \frac{|\Sigma_t|}{\omega_{d-1}}\right)^{\frac{d-2}{d-1}} \left( \frac{d-2}{d-1} - \frac{1}{(d-1)^2 \omega_{d-1}^{\frac{2}{d-1}} |\Sigma_t|^{\frac{d-3}{d-1}}} \int_{\Sigma_t} R_\Sigma dV\right) \\
&+  \frac{1}{2} \left( \frac{|\Sigma_t|}{\omega_{d-1}}\right)^{\frac{d-2}{d-1}}\!\!\!  \frac{1}{(d-1)^2 \omega_{d-1}^{\frac{2}{d-1}} |\Sigma_t|^{\frac{d-3}{d-1}}} \int_{\Sigma_t} \!\left( \frac{|\nabla_\Sigma H|^2}{H^2}\! + \!|II|^2 \!-\! \frac{H^2}{d-1} + R + d(d-1)\!\right)\! dV.
\end{aligned}
\end{equation} 
Note that this differs from the asymptotically flat formula only by the addition of the last term involving $d(d-1)$, which is relevant for the hyperbolic dominant energy condition. In the $\mathrm{SU}(n+1)$-invariant case, Proposition \ref{prop:monotonicity} implies that the first line is again nonnegative, moreover the following analogue of Proposition \ref{convergence1} yields the appropriate asymptotics for this quasi-local mass. 

\begin{prop}\label{convergence2}
Let $(M^{2(n+1)},g)$ be an asymptotically hyperbolic $\mathrm{SU}(n+1)$-invariant Riemannian manifold with outermost minimal surface boundary, for $n\geq 1$. If $\Sigma_s$ denotes the surface of distance $s$ from the boundary, then
\begin{equation}
\lim_{s\rightarrow\infty}\mathbf{m}^{\mathrm{hyp}}_{H}(\Sigma_s)= E_{\mathrm{hyp}}.
\end{equation}
\end{prop}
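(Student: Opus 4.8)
The plan is to run the argument of Proposition \ref{convergence1} with only the changes forced by the hyperbolic setting. Two such changes must be made. First, the two Hawking masses differ by a purely metric term,
\[
\mathbf{m}^{\mathrm{hyp}}_H(\Sigma)=\mathbf{m}_H(\Sigma)+\tfrac{1}{2}\left(\tfrac{|\Sigma|}{\omega_{d-1}}\right)^{\frac{d}{d-1}},\qquad d=2(n+1);
\]
second, the Einstein tensor no longer decays but satisfies $G(\nu,\nu)\to\tfrac12(d-1)(d-2)$ along the level sets, the value for hyperbolic space. So I would take two traces of the Gauss equations, $H^2=2G(\nu,\nu)+R_\Sigma+|II|^2$ with $G$ the Einstein tensor and $\nu$ the outward unit normal, split $|II|^2=\tfrac{H^2}{d-1}+|\mathring{II}|^2$ ($\mathring{II}$ the trace-free part), write $G(\nu,\nu)=\tfrac12(d-1)(d-2)+\overset{\circ}{G}(\nu,\nu)$ with $\overset{\circ}{G}(\nu,\nu)\to 0$, and substitute into \eqref{Hawkingmass:hyp}. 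Using the explicit formula \eqref{rsigma} for $R_\Sigma$, so that the computation from the proof of Proposition \ref{prop:monotonicity} gives $\int_{\Sigma_s}R_\Sigma\,dV=(d-1)(d-2)\omega_{d-1}\rho(s)^{d-3}+O(\rho(s)^{d-3}e^{-2qs})$, together with $(|\Sigma_s|/\omega_{d-1})^{1/(d-1)}=\rho(s)=\sinh s+O_2(e^{(1-q)s})$ and $|\Sigma_s|=\omega_{d-1}\rho(s)^{d-1}$, one checks that the explicit ``$1+\rho^2$'' inside \eqref{Hawkingmass:hyp}, the background contribution $\tfrac12(d-1)(d-2)|\Sigma_s|$ of $\int_{\Sigma_s}G(\nu,\nu)\,dV$, and the leading part of $\int_{\Sigma_s}R_\Sigma\,dV$ telescope identically to zero, leaving the exact identity
\[
\mathbf{m}^{\mathrm{hyp}}_H(\Sigma_s)=-\frac{\rho(s)}{(d-1)(d-2)\omega_{d-1}}\int_{\Sigma_s}\overset{\circ}{G}(\nu,\nu)\,dV+\mathcal{E}(s),
\]
with $\mathcal{E}(s)$ the remainder coming from $\int_{\Sigma_s}|\mathring{II}|^2\,dV$ and the $O(e^{-2qs})$ error in $\int_{\Sigma_s}R_\Sigma\,dV$.

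Next I would dispose of the remainder and identify the main term. Since $B,B'=O(e^{-qs})$ (for $n=1$, the anisotropy of $\Sigma_s$ is $O(e^{-qs})$), one has $|\mathring{II}|^2=O(e^{-2qs})$, so $\mathcal{E}(s)=O(e^{(d-2q)s})\to 0$ because $q>d/2$. For the main term I would invoke the hyperbolic analogue of \cite[Theorem 3.14]{Leetext}: under the asymptotics \eqref{AdSdecay} and the modified constraint \eqref{AdSID} with $\mu=O(e^{-2qr})$, the rescaled background-subtracted flux $-\frac{\rho(s)}{(d-1)(d-2)\omega_{d-1}}\int_{\Sigma_s}\overset{\circ}{G}(\nu,\nu)\,dV$ converges, as $s\to\infty$, to the Chru\'{s}ciel--Nagy energy \eqref{CHenergy}; note that the matter part of $\overset{\circ}{G}(\nu,\nu)$, namely $-8\pi\mu$, contributes a flux of order $e^{(d-2q)s}$ and so drops out. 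This yields $\lim_{s\to\infty}\mathbf{m}^{\mathrm{hyp}}_H(\Sigma_s)=E_{\mathrm{hyp}}$, with full equality rather than only an inequality as in Proposition \ref{convergence1}, because the splitting $|II|^2=\tfrac{H^2}{d-1}+|\mathring{II}|^2$ is an identity and $|\mathring{II}|^2\to 0$ --- equivalently, the leaves $\Sigma_s$ become totally umbilic at infinity.

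The main obstacle is this last step. In the asymptotically flat case the analogous convergence is quoted directly from \cite{Leetext}; here it must be re-derived and is genuinely more delicate, because the Chru\'{s}ciel--Nagy charge \eqref{CHenergy} carries the lapse weight $U=\cosh r$, the individual boundary integrands in \eqref{CHenergy} grow in $r$, and only the constraint-dictated combination has a finite limit. The safest route is to expand the explicit metric \eqref{aonfoih} in terms of $\mathfrak{g}=\varphi_{*}g-b$ via \eqref{asymptoticallyhyperbolic}, substitute into \eqref{CHenergy}, confirm that the terms involving the matter density are negligible, and verify that the surviving geometric terms reproduce $-\frac{\rho}{(d-1)(d-2)\omega_{d-1}}\int_{\Sigma_s}\overset{\circ}{G}(\nu,\nu)\,dV$ in the limit; along the way one must match the distance parameter $s$ with the background radial coordinate $r$ up to a decaying correction and keep careful track of the threshold $q>d/2$ throughout.
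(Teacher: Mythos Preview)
Your approach is valid in principle and the algebraic telescoping you describe is correct, but it differs substantially from the paper's argument and leaves the decisive step unresolved. The paper does not pass through the Gauss equation or the Einstein tensor at all. Instead it exploits the cohomogeneity-one structure on \emph{both} sides of the claimed equality. Since $H=(2n+1)\rho'/\rho$ is constant on each $\Sigma_s$ and $|\Sigma_s|=\omega_{2n+1}\rho^{2n+1}$, the hyperbolic Hawking mass reduces to the closed form
\[
\mathbf{m}_H^{\mathrm{hyp}}(\Sigma_s)=\tfrac{1}{2}\rho^{2n}\bigl(1+\rho^2-\rho'^2\bigr).
\]
On the energy side, the paper writes $\mathfrak{g}=g-b$ explicitly in the coordinates of Proposition~\ref{aofnoaigopinjopinhipoq}, computes $(\mathrm{div}_b\mathfrak{g})(\nu_b)$, $\mathrm{Tr}_b\mathfrak{g}$, and the remaining ingredients of \eqref{CHenergy} term by term, and after discarding contributions of order $e^{(2n+2-2q)s}$ obtains
\[
E_{\mathrm{hyp}}=\lim_{s\to\infty}\Bigl[\tfrac{1}{2}(\sinh s)^{2n}\bigl(1+\rho^2-\rho'^2+(\rho'-\rho\coth s)^2\bigr)\Bigr].
\]
The extra square and the discrepancy between $(\sinh s)^{2n}$ and $\rho^{2n}$ vanish in the limit because $q>n+1$, and the two displayed expressions match.

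Your route would require, at the crucial point, an Ashtekar--Hansen--type identity equating the background-subtracted Einstein flux with the Chru\'{s}ciel--Nagy charge \eqref{CHenergy}. Such results exist for asymptotically hyperbolic manifolds, but are not the off-the-shelf citation that \cite[Theorem 3.14]{Leetext} is in the flat case; you acknowledge this and propose, as the ``safest route,'' to expand \eqref{CHenergy} explicitly---which is precisely what the paper does directly, without the Einstein-tensor preamble. The conceptual payoff of your framing (equality rather than inequality arises because $|\mathring{II}|^2\to 0$) is nice, but in this symmetric setting the paper's head-on computation is both shorter and self-contained.
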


\begin{proof}
We will assume that $n>1$, as the case $n=1$ may be treated similarly. The hyperbolic defect tensor from the definition of hyperbolic mass \eqref{CHenergy}, when expressed in the coordinates provided by Proposition \ref{aofnoaigopinjopinhipoq}, is given by
\begin{equation}
\mathfrak{g}=g-b=\left(\rho^2 e^{-4nB} -\sinh^2 s\right)(d\psi +A)^2 +\left(\rho^2 e^{2B}-\sinh^2 s\right)g_{FS}.
\end{equation}
We proceed to compute the relevant terms of \eqref{CHenergy}. Note that $\mathfrak{g}(\nu_b,\cdot)=0$, and therefore 
\begin{equation}
(\mathrm{div}_{b}\mathfrak{g})(\nu_b)=\nu_b^j \pmb{\nabla}^i \mathfrak{g}_{ij}=-(\pmb{\nabla}^i \nu_b^j)\mathfrak{g}_{ij}
=-(\coth s)\mathrm{Tr}_b \mathfrak{g},
\end{equation}
where $\pmb{\nabla}$ denotes covariant differentiation with respect to $b$. Moreover
\begin{equation}
\mathrm{Tr}_b \mathfrak{g}=-(2n+1) +\left(\frac{\rho}{\sinh s}\right)^2\left(e^{-4n B}+2ne^{2B}\right),
\quad\quad\quad \mathfrak{g}(\nabla_b U,\nu_b)=0,
\end{equation}
in which $U=\cosh s$ is the lapse function. A further calculation and rearrangement shows that the energy becomes
\begin{align}\label{foaiaoihogh}
\begin{split}
E_{\mathrm{hyp}}=&\frac{1}{2(d-1) \omega_{d-1}}\lim_{s\rightarrow\infty} \int_{\Sigma_s} \left[U(\text{div}_b \mathfrak{g}) - U (d \mathrm{Tr}_b \mathfrak{g}) + (\mathrm{Tr}_b \mathfrak{g}) d U - \mathfrak{g}(\nabla_b U)\right] (\nu_b) d V\\
=&\lim_{s\rightarrow\infty}\frac{(\sinh s)^{2n}}{2}\left[1-\gamma\left(\frac{\rho^2}{\sinh^2 s}-2\frac{\rho^2 \cosh^2 s}{\sinh^2 s}+2\frac{\rho \rho' \cosh s}{\sinh s}\right)\right]\\
&+\lim_{s\rightarrow\infty}\frac{2n\rho^2\cosh s (\sinh s)^{2n-1}}{(2n+1)}B' (e^{-4nB}-e^{2B}),
\end{split}
\end{align}
with
\begin{equation}
\gamma=\frac{e^{-4nB}+2ne^{2B}}{2n+1}=1 + O(e^{-2qs}).
\end{equation}
Observe that the last term of \eqref{foaiaoihogh} decays on the order of $O(e^{(2n+2-2q)s})$, and hence
\begin{equation}\label{fohqoinfoinqpoihnmqh}
E_{\mathrm{hyp}}\!=\!\lim_{s\rightarrow\infty}\!\left[\frac{(\sinh s)^{2n}}{2}\!\left(\!1+\rho^2 -\rho'^2 +\left(\!\rho'\!-\frac{\rho\cosh s}{\sinh s}\right)^2 \right)\!+O\!\left(e^{(2n+2-2q)s}\right)\right]
\!=\!\lim_{s\rightarrow\infty} m_{H}^{\mathrm{hyp}}(\Sigma_s),
\end{equation}
since
\begin{equation}\label{giobqoinoiqhh}
\mathbf{m}_{H}^{\mathrm{hyp}}(\Sigma_s)=\frac{\rho^{2n}}{2}(1+\rho^2 -\rho'^2)
\end{equation}
and $q>n+1$.
\end{proof}

We are now in a position to complete the proof of Theorem \ref{foiqnoinoiqnhh1}. As in the asymptotically flat case, let $(M^{2(n+1)},\bar{g})$ be the Jang manifold associated with the given initial data set, which is constructed from the solution given in Theorem \ref{jangexist} with $v(0)=\pm 1$ if $\partial M^{2(n+1)}$ is a past (future) apparent horizon having $H(0)\neq 0$, or with $v(0)=0$ if $H(0)=0$. The asymptotics of $v$ imply that $\bar{g}=g+O_2(e^{-2q_0 s})$ where $q_0 > n+1$, and hence the Jang manifold is asymptotically hyperbolic. It is also $\mathrm{SU}(n+1)$-invariant, has an outermost minimal surface boundary, and the surfaces $\{\Sigma_{\bar{t}}\}_{\bar{t}=0}^{\infty}$ of constant $\bar{g}$-distance from the boundary give a smooth inverse mean curvature flow. Utilizing \eqref{dmdthyp}, the hyperbolic dominant energy condition, and the weak nonnegativity of the Jang scalar curvature \eqref{barscalar}, together with the arguments leading to \eqref{898989} produces
\begin{equation}\label{89898912}
\bar{E}_{\mathrm{hyp}}-\frac{1}{2}\!\left(\frac{\mathcal{A}}{\omega_{2n+1}}\right)^{\frac{2n}{2n+1}}\!\!\!\! - \frac{1}{2} \!\left(\frac{\mathcal{A}}{\omega_{2n+1}}\right)^{\frac{2n+2}{2n+1}}\!
\!\geq \!\frac{1}{(2n+1) \omega_{2n+1}^{\frac{2}{2n+1}}}\left(\int_{\partial M^{2(n+1)}}\!\!\!\!\!\!\phi X(\bar{\nu})dV-\!\int_{\Sigma_{\infty}}\!\!\phi X(\bar{\nu})dV\!\right),
\end{equation}
where $\bar{E}_{\mathrm{hyp}}$ is the total energy with respect to the Jang metric and Propition \ref{convergence2} has been employed. The boundary integral vanishes for the same reasons as presented in the proof of Theorem \ref{foiqnoinoiqnhh}. Furthermore the decay recorded in Section \ref{sec2} and \eqref{vfall} imply 
\begin{equation}\label{0987hyp}
\phi X(\bar{\nu})= v\rho'(-vH+\mathrm{Tr}_{\Sigma}k)
=O\left(e^{(1-2q_0)s}\right).
\end{equation}
It follows that the last term in \eqref{89898912} is also zero, since the integrals of $\Sigma_s$ fall-off on the order of $O(e^{(2n+2-2q_0)s})$. Therefore, if the original energy $E_{\mathrm{hyp}}$ agrees with that of the Jang manifold $\bar{E}_{\mathrm{hyp}}$, then \eqref{fohqoinfoinqpoihnmqh} confirms the desired Penrose inequality \eqref{hypPenrose}. To see that this is indeed valid, first observe that 
\begin{equation}
\left(\frac{d\rho}{d\bar{s}}\right)^2=(1-v^2)\left(\frac{d\rho}{ds}\right)^2 =\left(\frac{d\rho}{ds}\right)^2 +O\left(e^{(2-2q_0) s}\right),
\end{equation}
and then apply \eqref{giobqoinoiqhh} as well as Proposition \ref{convergence2} to find
\begin{equation}
\bar{E}_{\mathrm{hyp}}=\lim_{\bar{s}\rightarrow\infty} \bar{\mathbf{m}}_{H}^{\mathrm{hyp}}(\Sigma_{\bar{s}})
=\lim_{s\rightarrow\infty}\left(\mathbf{m}_{H}(\Sigma_s)+O\left(e^{(2n+2-2q_0) s}\right)\right)=E_{\mathrm{hyp}},
\end{equation}
where $\bar{\mathbf{m}}_{H}^{\mathrm{hyp}}$ denotes Hawking mass with respect to the Jang metric.

It remains to establish the rigidity statement. If equality holds in \eqref{hypPenrose}, then equality must hold in Proposition \ref{prop:monotonicity} for each surface $\Sigma_{\bar{s}}$. This shows that each such surface is round, implying that the Jang manifold is spherically symmetric. Furthermore, from \eqref{dmdthyp} it follows that $\bar{R}=-(2n+1)(2n+2)$ is constant, and we conclude that $(M^{2(n+1)},\bar{g})$ is isometric to a constant time slice of the Schwarzschild-AdS spacetime. Therefore, \eqref{fouqoboiqh} implies the following expression for the change of raidal coordinates
\begin{equation}
\bar{g}=d\bar{s}^2 +\rho^2(\bar{s}) g_{S^{2n+1}}=\frac{d\rho^2}{\phi^2(\rho)}+\rho^2 g_{S^{2n+1}}
\end{equation}
where $g_{S^{2n+1}}$ is the unit sphere metric, with
\begin{equation}
\phi^2(\rho)=1-\frac{2E_{\mathrm{hyp}}}{\rho^{2n}}+\rho^2.
\end{equation}
Note that this formula $\phi$ may also be found from \eqref{giobqoinoiqhh}, together with the fact that the Hawking mass is constant $\bar{\mathbf{m}}_{H}^{\mathrm{hyp}}(\Sigma_{\bar{s}})=E_{\mathrm{hyp}}$.
The original metric $g=\bar{g}-\phi^2 df^2$ is then induced from the graph (over a constant time slice) given by $f$ inside Schwarzschild-AdS. Lastly, the second fundamental form of this isometric embedding agrees with $k$ due to the fact that the second term (in fact each term except for $\mu$) of \eqref{barscalar} vanishes.

\section{Proof of Theorem \ref{thm3}: All Cohomogeneity One Cases}
\label{sec6} \setcounter{equation}{0}
\setcounter{section}{6}

In this section we will establish the Riemannian Penrose inequality for all cohomogeneity one manifolds. According to the discussion at the beginning of Section \ref{sec2}, in addition to the $\mathrm{SU}(n+1)$-invariant setting considered above there are two other cases to consider, namely the $\mathrm{Sp}(n+1)$ and $\mathrm{Spin}(9)$-invariant cases. Let $(M^d,g)$, $d\geq 3$ be an asymptotically flat cohomogeneity one Riemannian manifold with outermost minimal surface boundary and nonnegative scalar curvature. As previously mentioned, $M^d \cong [0,\infty) \times S^{d-1}$ and $g=ds^2 +g_s$ where $s$ is the distance function to the boundary, and $g_s$ is a 1-parameter family of $\mathcal{G}$-invariant metrics on $S^{d-1}$. Interestingly, when $\mathcal{G}$ is either 
$\mathrm{Sp}(n+1)$ or $\mathrm{Spin}(9)$, the analogue of Proposition \ref{prop:monotonicity} does not hold in general, and therefore the inverse mean curvature flow approach breaks down. To deal with this issue, we utilize the conformal flow method of Bray \cite{Bray} until the outermost minimal surface reaches an appropriate location in the asymptotic end. From there, a modified version of this proposition may be implemented to complete the argument with inverse mean curvature flow. This bypasses the more involved problem of showing that the conformal flow converges to Schwarzschild in the asymptotic end.

\subsection{$\text{Sp}(n+1)$-invariant metrics}

Let $d=4n+4$ for $n\geq 0$. The group $\mathcal{G}= \mathrm{Sp}(n+1)$ acts transitively on the sphere $S^{4n+3}$ with isotropy subgroup $\mathcal{H} = \mathrm{Sp}(n)$. Any $\mathcal{G}$-invariant metric $g_{\mathbf{c}}$ on this sphere depends (up to an overall scaling) on three positive parameters $\mathbf{c}=(c_1, c_2, c_3)$, and arises as a Riemannian submersion for the Hopf fibration
$S^3 \hookrightarrow S^{4n+3} \to \mathbb{HP}^n$ in which the parameters scale the fiber directions and the base is equipped with the canonical Einstein metric on quaternionic projective space. In this notation, the round metric of unit curvature is then described by $\mathbf{c}=(1,1,1)$. Moreover, the $s$-level set metrics from the discussion above are then given by $g_s=\rho^2 (s) g_{\mathbf{c}(s)}$ for some positive function $\rho(s)$. According to \cite[Section 8]{Betencourt:2021} the scalar curvature of $g_{\mathbf{c}}$ takes the form
\begin{align}
\begin{split}
R_{\mathbf{c}} =& \frac{2}{c_1 c_2 c_3}\left(c_1^2 + c_2^2 + c_3^2 - (c_2-c_3)^2 - (c_3 - c_1)^2 - (c_1 - c_2)^2\right)\\
&- 4n (c_1 + c_2 + c_3) + 16 n^2 + 32n.
\end{split}
\end{align}
Notice that setting $c_i =c^2$ yields
\begin{equation}
R_{\mathbf{c}}=\frac{6}{c^2}-12n c^2+16n^2+32n,
\end{equation}
showing that a collapse of the Hopf fibers results in blow-up of curvature, which is in contrast to the $\mathrm{SU}(n+1)$-invariant setting. Furthermore due to this behavior, Proposition \ref{prop:monotonicity} does not hold here without additional qualification. The next result provides a regime for which the result remains valid.

\begin{prop}\label{prop:monotonicity1}
Let $(M^{4(n+1)},g)$ be an asymptotically flat $\mathrm{Sp}(n+1)$-invariant Riemannian manifold with outermost minimal surface boundary, for $n\geq 0$. There exists a distance $s_0>0$ such that if $\Sigma_s$ denotes the surface of distance $s$ to the boundary and $s\geq s_0$, then
\begin{equation}
\int_{\Sigma_s}R_{\Sigma}dV\leq C_n |\Sigma_s|^{\frac{4n+1}{4n+3}}
\end{equation}
where $C_n = (4n+2)(4n+3)\omega_{4n+3}^{\frac{2}{4n+3}}$, with equality achieved in this regime if and only if $\Sigma_s$ is a round sphere.
\end{prop}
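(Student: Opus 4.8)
The plan is to reduce the inequality to an elementary analysis of a polynomial in the parameters $\mathbf{c}=(c_1,c_2,c_3)$, exactly as in the proof of Proposition \ref{prop:monotonicity}, but now paying attention to the extra lower-order terms $-4n(c_1+c_2+c_3)+16n^2+32n$ in $R_{\mathbf{c}}$ which were absent in the $\mathrm{SU}(n+1)$ case. Writing $g_s=\rho(s)^2 g_{\mathbf{c}(s)}$ and using that the $s$-level set sphere has area $|\Sigma_s|=\rho^{4n+3}|g_{\mathbf{c}}|$-volume and $R_\Sigma=\rho^{-2}R_{\mathbf{c}}$, the claimed inequality $\int_{\Sigma_s}R_\Sigma\,dV\le C_n|\Sigma_s|^{\frac{4n+1}{4n+3}}$ is equivalent (after dividing out the common power of $\rho$ and the volume factor) to a pointwise inequality of the schematic form
\begin{equation}
4n+3-\frac{1}{(4n+2)(4n+3)}R_{\mathbf{c}}\cdot(\text{vol factor})^{-\frac{2}{4n+3}}\ \geq\ 0,
\end{equation}
which I will rewrite, after homogenizing and absorbing $\rho$, as nonnegativity of a function $\widetilde{I}_n(\mathbf{c})$ on the octant $c_i>0$. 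The key point is that the quadratic-in-$1/c$ part of $R_{\mathbf{c}}$ forces blow-up as any $c_i\to 0$ (as already noted for $c_i\equiv c^2$), so $\widetilde{I}_n$ is large near the boundary of the octant, but the negative term $-4n(c_1+c_2+c_3)$ makes $\widetilde{I}_n$ negative somewhere in the interior when the $c_i$ are all large — hence the need to restrict to $s\ge s_0$.

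The mechanism that rescues the argument is the asymptotic flatness hypothesis together with the outermost minimal surface condition. First I would record, as in Section \ref{sec2}, that $g_s/\rho(s)^2\to g_{\mathbf{c}}$ with $\mathbf{c}\to(1,1,1)$ as $s\to\infty$ because the end is asymptotically Euclidean, and moreover that along radial distance the parameters stay in a compact neighborhood of $(1,1,1)$ once $s$ is large: precisely, for every $\epsilon>0$ there is $s_0$ with $|c_i(s)-1|<\epsilon$ for all $s\ge s_0$. So it suffices to verify that $\widetilde{I}_n\ge 0$ on a neighborhood $N_\epsilon$ of $(1,1,1)$, with equality only at $(1,1,1)$ (the round point), for suitable small $\epsilon$ independent of the particular manifold. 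Then the second step is the local analysis: show $\widetilde{I}_n(1,1,1)=0$, that $(1,1,1)$ is a critical point, and that the Hessian of $\widetilde{I}_n$ there is positive definite (or at worst positive semidefinite with the degenerate directions controlled by higher-order terms). The Hessian computation is the technical heart; it is a finite symmetric-function computation using the $S_3$-symmetry of $R_{\mathbf{c}}$ under permuting the $c_i$, so one can diagonalize in the basis $\{(1,1,1),(1,-1,0)\text{-type},(1,1,-2)\text{-type}\}$ and check the two eigenvalues (the "trace" direction and the "traceless" doublet) are strictly positive.

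The main obstacle I anticipate is precisely pinning down that the Hessian of $\widetilde{I}_n$ at $(1,1,1)$ is positive definite for every $n\ge 0$, since the competing contributions — the positive $\partial^2(1/c_ic_jc_k)$ and quadratic-difference terms versus the negative $-4n(c_1+c_2+c_3)$ and the correction coming from differentiating the $(\text{vol factor})^{-2/(4n+3)}$ weight — both scale with $n$, so one must check the sign of an explicit $n$-dependent combination rather than a fixed number. I would handle this by computing the two eigenvalues as rational functions of $n$ and confirming positivity for all $n\ge0$; if the "trace" eigenvalue should happen to vanish or become negative, one falls back on the fact that $\widetilde{I}_n$ restricted to the diagonal $c_1=c_2=c_3=c$ reduces to $\tfrac{6}{c^2}-12nc^2+\cdots$ rescaled, whose global behavior is transparent, and combines this with the strict positivity in the traceless directions. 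Once the neighborhood $N_\epsilon$ and the matching $s_0$ are fixed, the inequality and its rigidity statement (equality iff $\mathbf{c}=(1,1,1)$, i.e. $\Sigma_s$ round) follow immediately, completing the proof.
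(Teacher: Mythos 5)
Your proposal follows essentially the same route as the paper: reduce the inequality to nonnegativity of the scale-normalized function $I(\mathbf{c})=(4n+2)(4n+3)-(c_1c_2c_3)^{1/(4n+3)}R_{\mathbf{c}}$, observe that asymptotic flatness forces $\mathbf{c}(s)\to(1,1,1)$ as $s\to\infty$ so it suffices to work on a small neighborhood of the round point, and verify there that $(1,1,1)$ is a local isolated minimum; the paper compresses the last step into "a direct calculation." Your anticipated degeneracy is real — the $S_3$-symmetric Hessian has eigenvalues $4n+4$ (traceless doublet) and $(4n+4)(4n)/(4n+3)$ (trace direction), so the trace eigenvalue vanishes exactly at $n=0$ where $I$ is homothety-invariant along the diagonal; your fallback argument via the diagonal restriction handles this case correctly and is consistent with how the paper treats the coinciding $\mathrm{SU}(2)$ case.
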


\begin{proof}
Since the volume form for $g_{\mathbf{c}}$ is given by $dV=(c_1 c_2 c_3)^{1/2}dV_{S^{4n+3}}$, we have
\begin{equation}
I(\mathbf{c}):= (4n+2)(4n+3) -\frac{1}{\omega_{4n+3}^{\frac{2}{4n+3}}|\Sigma_s|^{\frac{4n+1}{4n+3}}}\int_{\Sigma_s}R_{\Sigma}dV=(4n+2)(4n+3)-(c_1 c_2 c_3)^{1/(4n+3)}R_{\mathbf{c}}.
\end{equation}
A direct calculation shows that $\mathbf{c}=(1,1,1)$ is a local isolated minimum for the function $I$.
Thus, since $\Sigma_s$ uniformly approaches the unit round sphere as $s\rightarrow\infty$, there exists $s_0$ such that $s\geq s_0$ implies $I(\mathbf{c(s)})\geq I(1,1,1)$ with equality if and only if $g_s$ is a round metric. Moreover, as in the proof of Proposition \ref{aofnoaigopinjopinhipoq}, asymptotic flatness produces
\begin{equation}\label{uiuik1}
\rho(s)=s+O_2(s^{1-\tau}),\quad\quad\quad\quad c_i(s)=1+O_2(s^{-\tau}),\quad i=1,2,3.
\end{equation}
Hence, since $I(1,1,1)=|\nabla I(1,1,1)|=0$ we have $I(\mathbf{c(s)})=O(s^{-2\tau})$ as $s\rightarrow\infty$.
\end{proof}

\subsection{$\text{Spin}(9)$-invariant metrics} 

The group $\mathcal{G}=\mathrm{Spin}(9)$ acts transitively on $S^{15}$ with isotropy subgroup $\mathcal{H}=\mathrm{Spin}(7)$.  Any $\mathcal{G}$-invariant metric $g_{c}$ on this sphere depends (up to an overall scaling) on one positive parameters $c$, and arises as a Riemannian submersion for the Hopf fibration
$S^7 \hookrightarrow S^{15} \to S^8$ in which the parameter scales the fiber and the base is equipped with the unit round metric. In this notation, the round metric of unit curvature on $S^{15}$ is then described by $c=1$. Moreover, the $s$-level set metrics from the discussion at the beginning of this section are then given by $g_s=\rho^2 (s) g_{c(s)}$ for some positive function $\rho(s)$. According to \cite[Section 8]{Betencourt:2021} the scalar curvature of $g_{c}$ takes the form
\begin{equation}
R_{c} = \frac{42}{c} - 56 c + 224.
\end{equation}
Notice that as in the $\mathrm{Sp}(n+1)$-invariant case, collapse of the Hopf fibers results in blow-up of curvature, again in contrast to the $\mathrm{SU}(n+1)$-invariant setting. Thus, we must again replace Proposition \ref{prop:monotonicity} with an asymptotic version.

\begin{prop}\label{prop:monotonicity2}
Let $(M^{16},g)$ be an asymptotically flat $\mathrm{Spin}(9)$-invariant Riemannian manifold with outermost minimal surface boundary. There exists a distance $s_0>0$ such that if $\Sigma_s$ denotes the surface of distance $s$ to the boundary and $s\geq s_0$, then
\begin{equation}
\int_{\Sigma_s}R_{\Sigma}dV\leq \mathcal{C} |\Sigma_s|^{\frac{13}{15}}
\end{equation}
where $\mathcal{C} = 210\omega_{15}^{\frac{2}{15}}$, with equality achieved in this regime if and only if $\Sigma_s$ is a round sphere.
\end{prop}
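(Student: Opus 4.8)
The plan is to carry over the proof of Proposition~\ref{prop:monotonicity1} essentially word for word, the only change being the explicit scalar curvature formula. First I would record that the Riemannian submersion metric $g_c$ for the Hopf fibration $S^7 \hookrightarrow S^{15} \to S^8$ has volume form $dV = c^{7/2}\, dV_{S^{15}}$, the exponent $7/2$ arising from the seven fiber directions, each of which has its length scaled by $c^{1/2}$. Using this together with $R_\Sigma = \rho^{-2} R_c$ on the leaf $\Sigma_s = (S^{15}, \rho(s)^2 g_{c(s)})$ and $|\Sigma_s| = \rho(s)^{15} c(s)^{7/2}\omega_{15}$, an exponent count collapses the quantity to be bounded into a single function of $c$ alone:
\begin{equation}
I(c) := 14 \cdot 15 - \frac{1}{\omega_{15}^{2/15}|\Sigma_s|^{13/15}}\int_{\Sigma_s} R_\Sigma\, dV = 210 - c^{7/15} R_c = 210 - 42 c^{-8/15} + 56 c^{22/15} - 224 c^{7/15}.
\end{equation}

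Next I would verify by a one-variable calculation that $c = 1$ is an isolated local minimum of $I$: one checks directly that $I(1) = 0$, $I'(1) = 0$, and $I''(1) > 0$. Note that $I$ is \emph{not} globally minimized at $c = 1$ (indeed $I(c) \to -\infty$ as $c \to 0^+$, reflecting the blow-up of $R_c$ under collapse of the Hopf fibers), which is precisely why the conclusion is only asserted for $s \geq s_0$ and why the conformal flow must first be used to push the horizon into the asymptotic end. Since asymptotic flatness forces the leaves $\Sigma_s$ to converge, uniformly in the $C^2$ topology of metrics on $S^{15}$, to the unit round sphere, the squashing parameter satisfies $c(s) \to 1$; hence there is an $s_0 > 0$ beyond which $c(s)$ remains in the basin of this local minimum. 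For $s \geq s_0$ this gives $I(c(s)) \geq I(1) = 0$, i.e. $\int_{\Sigma_s} R_\Sigma\, dV \leq \mathcal{C}|\Sigma_s|^{13/15}$ with $\mathcal{C} = 210\,\omega_{15}^{2/15}$, with equality exactly when $c(s) = 1$, which is the case that $g_s$ is round.

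Finally, for the downstream uses (the $\mathrm{Spin}(9)$ analogue of Proposition~\ref{convergence1} and the rigidity statement) I would also extract the decay rate. As in the proof of Proposition~\ref{prop:monotonicity1}, asymptotic flatness yields $\rho(s) = s + O_2(s^{1-\tau})$ and $c(s) = 1 + O_2(s^{-\tau})$, and since $I(1) = |\nabla I(1)| = 0$ this upgrades the inequality to $I(c(s)) = O(s^{-2\tau})$ as $s \to \infty$. I do not anticipate a genuine obstacle here: the only points requiring care are the exponent bookkeeping that produces the clean form $c^{7/15} R_c$, and confirming that the critical point at $c = 1$ is a strict local minimum rather than a degenerate or inflection point — both are settled by the direct computation above.
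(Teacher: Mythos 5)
Your proposal matches the paper's proof essentially step for step: compute the volume form $dV = c^{7/2}\,dV_{S^{15}}$, reduce the claim to nonnegativity of the single-variable function $I(c) = 210 - c^{7/15}R_c$, check that $c=1$ is an isolated local (not global) minimum with $I(1)=I'(1)=0$, and invoke asymptotic flatness to place $c(s)$ in the basin of that minimum for $s\geq s_0$, with the same $O(s^{-2\tau})$ decay at the end. The only cosmetic difference is that you spell out $I''(1)>0$ and comment on the lack of a global minimum; these are correct observations but do not change the argument.
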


\begin{proof}
Since the volume form for $g_{c}$ is given by $dV=c^{7/2}dV_{S^{15}}$, we have
\begin{equation}
I(c):= 210 -\frac{1}{\omega_{15}^{\frac{2}{15}}|\Sigma_s|^{\frac{13}{15}}}\int_{\Sigma_s}R_{\Sigma}dV=210-c^{7/15}R_{c}.
\end{equation}
A direct calculation shows that $c=1$ is a local isolated minimum for the function $I$.
Thus, since $\Sigma_s$ uniformly approaches the unit round sphere as $s\rightarrow\infty$, there exists $s_0$ such that $s\geq s_0$ implies $I(c(s))\geq I(1,1,1)$ with equality if and only if $g_s$ is a round metric. Moreover, as in the proof of Proposition \ref{aofnoaigopinjopinhipoq}, asymptotic flatness produces
\begin{equation}\label{uiuik2}
\rho(s)=s+O_2(s^{1-\tau}),\quad\quad\quad\quad c(s)=1+O_2(s^{-\tau}),\quad i=1,2,3.
\end{equation}
Hence, since $I(1)=|I'(1)|=0$ we have $I(c(s))=O(s^{-2\tau})$ as $s\rightarrow\infty$.
\end{proof}

\subsection{Combining the conformal flow with inverse mean curvature flow}

Let $(M^d,g)$ be as described at the start of this section, with $\mathcal{G}=\mathrm{Sp}(n+1)$ or $\mathrm{Spin}(9)$. Consider the conformal flow of metrics defined by $g_t=u_t^{\frac{4}{n-2}}g$ where $\tfrac{d}{dt}u_t=v_t$ satisfies
\begin{equation}\label{deltagv}
\Delta_g v_t =0 \quad\text{ on }\quad M^{d}_t, \quad\quad v_t =0 \quad\text{ on }\quad \partial M^d_t,
\quad\quad v_t(x)\rightarrow -e^{-t} \quad\text{ as }\quad |x|\rightarrow\infty,
\end{equation}
and $v_t=0$ on $M^d \setminus M^d_t$. Here $M^d_t$ denotes the region outside of the outermost minimal surface (denoted $\partial M^d_t$) in $(M^d, g_t)$. The conformal flow was initially studied \cite{Bray} in dimension 3, and was extended to higher dimensions in \cite{Bray:2007opu}. In particular, the flow exists as long as the outermost minimal surfaces involved remain smooth. In the current cohomogeneity one setting, the relevant minimal surfaces $\partial M^d_t$ must be level sets of the distance function to $\partial M^d$, and are therefore smooth. Moreover, the functions $v_t$ as well as $u_t$ depend only on $s$, showing that the conformal metrics $g_t$ are also $\mathcal{G}$-invariant. According to \cite[Lemma 2.3]{Bray:2007opu}, existence of the flow guarantees that the areas $|\partial M_t^d|$ remain constant in $t$. Furthermore, the mass decrease law, which was partially responsible for the dimensional restriction in \cite{Bray:2007opu} due to its reliance on the positive mass theorem, is valid here since each $M^d_t$ is a spin manifold. More precisely, the positive mass theorem is applied to a doubled manifold with one of the two ends being compactified, and in this context we may apply the `corners' version of this result obtained with harmonic spinors \cite[Theorem 3.1]{ShiTam}. It follows that the mass $m(t)$ of $(M^d_t,g_t)$ is nonincreasing in $t$.

We now claim that the flow surfaces $\partial M^d_t$ reach the asymptotic end of $M^d$ in finite time, and in fact eventually leave every compact set. This was originally established in dimension 3 \cite[Theorem 13]{Bray}, while in \cite{Bray:2007opu} this issue is avoided altogether. Although the original proof relied on the Gauss-Bonnet theorem which is not available here, a weaker version of this result \cite[Theorem 12]{Bray} extends to higher dimensions with only minor changes. An immediate corollary of this theorem shows that $\partial M^d_t$ cannot be entirely enclosed by a single coordinate sphere in the asymptotic end, for all $t$. Because the flow surfaces in the current setting are $\mathcal{G}$-invariant, if they did not eventually leave every compact set then they would be entirely enclosed by a coordinate sphere. Thus, we may apply the higher dimensional analogue of Bray's observation to obtain the desired conclusion. Note that the proof of \cite[Theorem 12]{Bray} makes use of harmonic asymptotics, however this is not necessary as the relevant harmonic functions may be expanded in spherical harmonics to produce the same outcome.

To complete the proof, run the conformal flow until time $t_0$ when $\partial M^d_{t_0}$ reaches sufficiently far out in the asymptotic end to enclose $\Sigma_{s_0}$, the designated surface appearing in Propositions \ref{prop:monotonicity1} and \ref{prop:monotonicity2}. Properties of the flow discussed above guarantee that
\begin{equation}\label{ofinqoingoipnqh}
m\geq m(t_0),\quad\quad\quad\quad |\partial M^d_{t_0}|=\mathcal{A},
\end{equation}
where $m$ is the ADM mass of $(M^d,g)$ and $\mathcal{A}$ is the area of its boundary. On the other hand, the aforementioned propositions give rise to monotonicity for the Hawking mass in $(M^d_{t_0},g_{t_0})$, as in Section \ref{sec4}. Moreover, since the functions $I$ from the proof of these propositions vanishes to second order at the round metric, the asymptotic limit of the Hawking mass is no greater than $m(t_0)$; this is established in the same manner as Proposition \ref{convergence1}. Therefore
\begin{equation}\label{fvlkanognopiqnhq}
m(t_0)\geq \frac{1}{2}\left(\frac{|\partial M^d_{t_0}|}{\omega_{d-1}}\right)^{\frac{d-2}{d-1}},
\end{equation}
and combining \eqref{ofinqoingoipnqh} with \eqref{fvlkanognopiqnhq} produces the desired Penrose inequality since $\mathcal{A}_h =\mathcal{A}$. Lastly, consider the case of equality for \eqref{ofinaoinoinoiqhhq}. 
This forces equality between the masses of \eqref{ofinqoingoipnqh}, and thus the rigidity statement of the positive mass theorem used for the (conformal flow) mass decrease law, implies that $(M^d_t, g_t)$ is  spherically symmetric for $t\leq t_0$. In particular, $(M^d,g)$ is spherically symmetric and is therefore isometric to the canonical slice of a Schwarzschild spacetime \cite[Theorem 3.4]{BKS}.

\appendix

\section{Vanishing of Linear Momenta}\label{appvanishk}

In this appendix we show that under the hypotheses of Theorem \ref{foiqnoinoiqnhh}, the ADM linear momenta $P_i$ vanish so that the ADM mass agrees with the ADM energy $m=E$. Let $n>1$ and use the expressions for the metric and extrinsic curvature given in Proposition \ref{aofnoaigopinjopinhipoq}, to compute the following two components of the momentum density 
\begin{equation}\label{alkfjlkaj}
      J(\mathbf{e}_1) = J(\partial_s)  = -k'_b - 2n k'_c + \frac{(2n+1) \dot\rho}{\rho} k_a - (k_b + 2n k_c) \frac{\rho'}{\rho} + 2n B' (k_b - k_c), 
\end{equation}
\begin{equation}\label{oginqoingoinqh}
      J(\mathbf{e}_2) = k'_s + k_s \left((2n+2) \frac{\dot \rho}{\rho} - 2n B' \right),
\end{equation}
where $\mathbf{e}_1$ and $\mathbf{e}_2$ are a part of the orthonormal frame from \eqref{foanofinh}. Recalling the formula for $\mathrm{Tr}_g k$ in \eqref{tracek12} motivates a rewriting of \eqref{alkfjlkaj} by
\begin{equation}
k'_a +(2n+2)\frac{\rho'}{\rho}k_a=J(\mathbf{e}_1)+\left(\mathrm{Tr}_g k\right)'+\frac{\rho'}{\rho}\mathrm{Tr}_g k -2n B' (k_b -k_c).
\end{equation}
Since the asymptotically flat fall-off conditions \eqref{1.3} imply that the right-hand side is $O(s^{-2\tau-2})$, it follows that
\begin{equation}
k_a=\rho^{-2\tau-2}\left[\int_{s_0}^{s}\rho^{2\tau+2}O(t^{2\tau+2})dt+C\right]=O(s^{-2\tau-2})
\end{equation}
for some constant $C$. Similarly, we also find that $k_s=O(s^{-2\tau-2})$ from \eqref{oginqoingoinqh}. Moreover, because the only possible nonzero components of the ADM linear momentum involve only $k(e_1,e_1)=k_a$ or $k(e_1,e_2)=k_s$, and $\tau>n$, we conclude that the ADM linear momentum vanishes. The case $n=1$ is treated analogously using Section \ref{section2.2}.
Note that the additional decay of $\mathrm{Tr}_g k$ from \eqref{1.3} is only used here, to obtain the vanishing linear momentum.

\section{An Example}
\label{appA}

In this last appendix we will exhibit cohomogeneity one asymptotically flat and asymptotically hyperbolic initial data, which deviate in a significant manner from spherical symmetry by allowing for nonzero angular momentum.  Consider the 2-parameter family of asymptotically flat $\mathrm{SU}(n+1)$-invariant initial data $([r_+,\infty)\times S^{2n+1},g,k)$ in which
\begin{equation}\label{MP(AdS)}
g = U(r)^2 dr^2 + P(r)^2 \left( d \psi + A \right)^2 + r^2 g_{FS} ,\quad\quad\quad
k = -r^{-1}U(r) W'(r)P(r)^3 dr(d \psi + A),
\end{equation} 
where $\psi$, $A$, and $g_{FS}$ are as in \eqref{Berger} and
\begin{equation}
U(r)^2 = \left(1 - \frac{2m}{r^{2n}} + \frac{2m a^2}{r^{2n+2}} \right)^{-1}, \quad\quad P(r) = r \left( 1 + \frac{2m a^2}{r^{2n+2}}\right)^{1/2}, \quad\quad W(r) = \frac{2m a}{r^{2n} P(r)^2},
\end{equation} 
with $m$ and $a$ denoting the mass and angular momentum parameters. The value $r_+$ is the largest positive root of $U(r)^{-2}$, and corresponds to an outermost minimal surface. This initial data set arises from the 2-parameter family of Myers-Perry stationary asymptotically flat black hole solutions of the vacuum Einstein equations given by
\begin{equation}
\mathbf{g} = -r^2 U(r)^{-2} P(r)^{-2}d t^2 + U(r)^2 dr^2 + P(r)^2 \left( d \psi + A - W(r) d t\right)^2 + r^2 g_{FS}.
\end{equation} 
Note that when $a=0$ the Schwawrzschild solution is recovered. The event horizon is located at $r = r_+$ and has  null generator
\begin{equation}
\xi = \partial_t + \Omega \partial_\psi, \qquad\quad \Omega = \frac{2m a}{r_+^{2(n+1)} + 2 m a^2},
\end{equation} 
where $\Omega$ is the angular velocity. Moreover, a calculation shows that for the black hole to be subextremal
the parameters must satisfy $r_+^2 > n^{-1}(n+1)a^2$, with equality corresponding to an extreme Myers-Perry solution having a degenerate horizon.  It is convenient to use $r_+$ and $a$ express relevant quantities. In particular, the mass parameter $m$ coincides with the ADM mass and takes the form 
\begin{equation}
m = \frac{r_+^{2(n+1)}}{2(r_+^2 - a^2)},
\end{equation}
while the area of a cross section of the event horizon becomes
\begin{equation}
\mathcal{A} = \omega_{2n+1} P(r_+) r_+^{2n} = \frac{\omega_{2n+1} r_+^{2(n+1)}}{\sqrt{r_+^2-a^2}},
\end{equation}
and the only nonzero ADM angular momentum occurs in the $\psi$-direction and is given by $\mathcal{J}_\psi = m a$ where by definition
\begin{equation}
\mathcal{J}_{\psi} = \frac{1}{2(n+1)\omega_{2n+1}}\int_{S_\infty} (k - (\text{Tr} k)g) (\partial_r ,\partial_\psi) dV.
\end{equation} 
We now have the relation
\begin{equation}
m = \left(\frac{r_+^2}{r_+^2 - a^2}\right)^{\frac{n+1}{2n+1}} \frac{1}{2} \left(\frac{\mathcal{A}}{\omega_{2n+1}}\right)^{\frac{2n}{2n+1}}.
\end{equation} 
Note that this satisfies the Penrose inequality since $a^2 r_{+}^{-2}<1$, and equality holds only for the case of Schwarzschild, when $a=0$.

An asymptotically hyperbolic generalization of the above rotating black hole initial data may be obtained by setting
\begin{equation}
U(r)^2 = \left(1 + r^2 - \frac{2m(1-a^2)}{r^{2n}} + \frac{2m a^2}{r^{2n+2}} \right)^{-1}.
\end{equation} 
The resulting data coincides with the canonical slice of a Myers-Perry-AdS black hole, a solution of the stationary vacuum Einstein equations with negative cosmological constant, having equal angular momenta. Again letting $r_+$ denote the largest positive root of $U(r)^{-2}$, there will be an event horizon at $r= r_+$ provided $m > 0$ and $0 \leq a < 1$. Notice that when $m =0$ the spacetime is the 
$(2n+3)$-dimensional AdS space, whereas when $a=0$ the solution reduces to Schwarzschild-AdS.  A computation reveals the total energy to be
\begin{equation}
E_{\mathrm{hyp}} = m \left( 1 + \frac{a^2}{2n+1} \right),
\end{equation} 
while as before the angular momentum is $\mathcal{J}_\psi = m a$. Expressing the mass parameter in terms of $r_+$ and $a$ yields
\begin{equation}
m = \frac{(1+r_+^2)r_+^{2(n+1)}}{2(r_+^2(1-a^2) - a^2)},
\end{equation} 
and therefore
\begin{equation}
m = \frac{1}{2\left(1-\alpha\right)^{\frac{n+1}{2n+1}} (1-\beta)} \left(\frac{\mathcal{A}}{\omega_{2n+1}}\right)^{\frac{2n}{2n+1}} + \frac{1}{2\left(1-\alpha\right)^{\frac{n}{2n+1}} (1-\beta)} \left(\frac{\mathcal{A}}{\omega_{2n+1}}\right)^{\frac{2n+2}{2n+1}}
\end{equation} 
where
\begin{equation}
\alpha=\frac{a^2}{r_+^2}, \quad\qquad \beta= \frac{a^2}{r_+^2(1-a^2)}.
\end{equation}
Next observe that a calculation provides the nondegeneracy condition 
\begin{equation}
r_+^2(n + (n+1)r_+^2) - a^2 (1+n)(1+r_+^2)^2 \geq 0.
\end{equation}
Hence, because restrictions on the parameters imply that $\alpha<1$ and $\beta<1$, and $E_{\mathrm{hyp}}\geq m$ with equality only when $a=0$, it follows that the hyperbolic Penrose inequality holds with saturation only for Schwarzschild-AdS.

\end{document}